\documentclass[a4paper, 12pt]{amsart}
\usepackage{amssymb}
\usepackage{amsthm}
\usepackage{amsmath}
\usepackage{tikz}
\usepackage{graphicx}
\usepackage{caption}
\usepackage{subcaption}
\usepackage[utf8]{inputenc}
\usepackage[english]{babel}
\usepackage[shortlabels]{enumitem}
\usepackage{comment}
\usepackage{faktor}

\newcommand{\Filip}[1]{{\sf $\diamondsuit$ [#1]}}
\newcommand{\bek}[1]{{\sf $\clubsuit$ [#1]}}

\DeclareMathOperator{\HS}{\mathrm{HS}}

\newtheorem{theorem}{Theorem}[section]

\newtheorem{lemma}[theorem]{Lemma}

\newtheorem{proposition}[theorem]{Proposition}

\theoremstyle{definition}
\newtheorem{definition}[theorem]{Definition}
\newtheorem{example}[theorem]{Example}
\newtheorem{remark}[theorem]{Remark}

\setlength{\hoffset}{-1cm}
\setlength{\voffset}{-0.5cm}
\addtolength{\textwidth}{2cm}
\addtolength{\textheight}{1.5cm}

\title[The SLP for some monomial almost CI's]{The strong Lefschetz property for some monomial almost complete intersections}

\author{Bek Chase and Filip Jonsson Kling}

\address{Department of Mathematics\\ University of Central Arkansas\\ 201 Donaghey Ave, Conway, AR 72035, USA}
\email{rchase@uca.edu}

\address{Department of Mathematics\\ Stockholm University\\ SE-106 91 Stockholm, Sweden}
\email{filip.jonsson.kling@math.su.se}

\begin{document}

\begin{abstract}
Motivated by the foundational result that a monomial complete intersection has the strong Lefschetz property (SLP) in characteristic zero, it is natural to ask when monomial almost complete intersections have the SLP. In this paper, using the Hilbert series as a central tool, we investigate the strong Lefschetz property for certain monomial almost complete intersections, those with the non-pure-power generator having support in two variables, and those with symmetric Hilbert series. In the former case, we give a complete classification for when the SLP holds, and in the latter case, we prove that such algebras always have the SLP.
\end{abstract}
\maketitle

\section{Introduction}
Consider a polynomial ring $R=k[x_1,\dots, x_n]$ over a field $k$ of characteristic zero and let $I$ be some Artinian ideal defining a $k$-algebra $A=R/I$. An important open problem is to determine for which families of $k$-algebras $A$ there exists a linear form $\ell\in A_1$ such that all multiplication maps
\[
\cdot \ell^t : A_i \to A_{i+t}
\]
have maximal rank (i.e., each map is injective or surjective). If the maps have full rank for $t=1$, then we say that $A$ has the \emph{weak Lefschetz property} (WLP), and if it holds for all $t\geq 1$, then we say $A$ has the \emph{strong Lefschetz property} (SLP). By an abuse of notation we will sometimes simply say the ideal $I$ satisfies the WLP or SLP when $A$ does. 

The study of Lefschetz properties is an active area of research in commutative algebra with connections to several other areas of mathematics such as algebraic geometry, topology and combinatorics. The first major theorem in the area was proven independently by Stanley \cite{Stanley} and Watanabe \cite{Watanabe}, showing that when $A$ is defined by an (Artinian) monomial complete intersection $I=(x_1^{a_1},\dots, x_n^{a_n})$, then $A$ has the SLP. 

A natural next step is then to consider Artinian algebras defined by monomial ideals having one additional generator, thus minimally generated by $n+1$ monomials instead. These ideals are called \emph{monomial almost complete intersections} (MACIs). Such algebras can fail not only the SLP, but also the WLP. As a first example, we have the Togliatti system $I=(x_1^3, x_2^3, x_3^3, x_1x_2x_3)$ which fails the WLP. Remarkably, Brenner and Kaid \cite{Brenner-Kaid} proved that this is the only homogeneous ideal generated by four cubics $x_1^3,x_2^3,x_3^3$ and some $f$ of degree $3$ in three variables which fails the WLP. Later, both families of monomial almost complete intersections failing WLP \cite{MMRN-Almost-CI}, having the SLP \cite{Gluing_SashaSamuelLisa}, and families where having the WLP or not is a question of non-trivial combinatorics \cite{Cook-Nagel-Almost-CI}, have been established. 

Our goal here is to study the strong Lefschetz property for two particular families of monomial almost complete intersections.\newpage

\begin{itemize}
    \item[(i)] $I=(x_1^{a_1},\dots, x_n^{a_n},m)$, where $m = x_i^{\alpha}x_j^{\beta}$ has support in two variables.
    \item[(ii)] $I=(x_1^{a_1},\dots, x_n^{a_n},m)$, where $m \neq 0$ is any monomial such that $R/I$ has a symmetric Hilbert series.
\end{itemize}
In both cases we give a complete classification for the SLP. In particular, in the first case, we give the specific criteria for which $\alpha$ and $\beta$ the ideal has the SLP (Theorem \ref{thm:SLP_supp_two}), while in the second case, we prove that it always has the SLP (Theorem \ref{thm:slp_symmetric_aci}). 

This article is laid out as follows. In Section \ref{sect:preliminaries}, we recall necessary facts which will be used throughout. Section \ref{sect:small_support} focuses on the first case $(i)$ of monomial almost complete intersections from above. We prove that such an algebra $A$ satisfies the strong Lefschetz property whenever (assuming $m = x_1^{\alpha}x_2^{\beta}$) the Hilbert series of $A/(x_3,\dots,x_n)$ satisfies certain conditions corresponding to numerical conditions on the exponents of the generators by a thorough study of the codimension two case. In the last section, we prove that any monomial almost complete intersection with a symmetric Hilbert series has the strong Lefschetz property, using the central simple modules of Harima and Watanabe \cite{central-simple, nonstd} as a key tool. We end with a remark discussing the potential of our results to also provide knowledge of the Lefschetz properties for almost complete intersections which are not necessarily monomial. 

\section{Preliminaries}\label{sect:preliminaries}
Throughout this paper, we will be working with the polynomial ring $R=k[x_1,\dots, x_n]$ over a field $k$ of characteristic zero. Since our main focus are monomial almost complete intersections, we begin by giving a definition for these algebras. 

\begin{definition}
A \emph{monomial almost complete intersection} in $k[x_1,\dots, x_n]$ is an Artinian monomial ideal with $n+1$ minimal generators.
\end{definition}

Using that pure powers of all variables must appear as minimal generators of an Artinian monomial ideal, we get that a monomial almost complete intersection may equivalently be defined as an ideal of the form 
\[
I=(x_1^{a_1},\dots, x_n^{a_n},m)
\]
where $m$ is some monomial not contained in $(x_1^{a_1},\dots, x_n^{a_n})$. The main tool used to understand these ideals is the Hilbert series. 


\begin{definition}
Let $A = \bigoplus_{i=0}^c A_i$ be a standard graded Artinian $k$-algebra where $k$ is a field of characteristic zero. If $M$ is a finite graded Artinian $A$-module, then we can write $M = \bigoplus_{i=p}^q M_i$, where $M_p \neq 0$ and $M_q \neq 0$. Here $q$ is the socle degree of $M$, i.e., $q$ is the largest degree of an element in $M$ which is annihilated by the ideal $(x_1,\dots,x_n)$. The Hilbert function of $M$ is the map $i \mapsto h_i := \text{dim}_k M_i$ and the Hilbert series of $M$, which when $M$ is Artinian is a Laurent polynomial, is given by
\[
\HS_M(t) = \sum_i h_i t^i.
\]
If the Hilbert series of $M$ is symmetric, that is, if $\text{dim}_k M_{p+i} = \text{dim}_k M_{q-i}$ for all $i$, then we call $(p+q)/2$ the \textit{reflecting degree} of $\HS_M(t)$, or just the reflecting degree of $M$ for convenience. If $M$ and $M'$ are two finite $A$-modules, then we say the reflecting degrees $r$ of $M$ and $r'$ of $M'$ \textit{coincide} if $r = r'$ or $|r-r'| = \frac{1}{2}$.
\end{definition}
We will sometimes illustrate Hilbert series pictorially as in Figure \ref{fig:Basic_Hilbert}. In this example, the corresponding Hilbert function is nonzero for $i$ in the interval from $0$ up to and including $c$. In particular, the values of the Hilbert function are increasing from $0$ to $a$, constant between $a$ and $b$, and then decreasing from $b$ to $c$. 
\begin{figure}[h]
\centering
\begin{tikzpicture}[scale=0.9]
\draw (-2,0) -- (0,2);
\draw (0,2) -- (3,2);
\draw (3,2) -- (5,0);
\node[below] at (-2,0) {\tiny{$0$}};
\node[below] at (5,0) {\tiny{$c$}};
\node[above] at (0,2) {\tiny{$a$}};
\node[above] at (3,2) {\tiny{$b$}};
\end{tikzpicture}
\caption{}
\label{fig:Basic_Hilbert}
\end{figure}

One particular class of Hilbert series that will be important to us is the following.

\begin{definition}
Let $\sum_i h_i t^i$ be the Hilbert series of an algebra $A$ with socle degree $D$. We say that the Hilbert series of $A$, or sometimes just $A$, is \emph{almost centered} if it satisfies
\begin{align*}
h_{i-1} \leq h_{D-i} \leq h_i \text{ for all } &0 \leq i \leq \lfloor \frac{D}{2} \rfloor, \text{ or}\\
h_{D-i+1} \leq h_{i} \leq h_{D-i} \text{ for all } &0 \leq i \leq \lfloor \frac{D}{2} \rfloor.
\end{align*}
\end{definition}

\begin{remark}
\label{rem:ac_version2}
Sometimes it can be useful to utilize an equivalent definition of almost centered algebras. By \cite[Corollary 5.6]{full_rank_recursion}, $A$ is almost centered if and only if both of the following properties are satisfied.
\begin{itemize}
\item If $h_i<h_j$ for some $i<j$, then $h_{i-s}\leq h_{j+s}$ for any $s \geq 0$.
\item If $h_i>h_j$ for some $i<j$, then $h_{i-s}\geq h_{j+s}$ for any $s\geq 0$.
\end{itemize}
\end{remark}

Almost centered Hilbert series were first introduced by Lindsey in order to prove the following theorem.

\begin{theorem}[{\cite[Theorem 3.10]{Lindsey}}]
Let $A$ be a standard graded Artinian $k$-algebra with the SLP over a field $k$ of characteristic zero with Lefschetz element $\ell$. Then $A\otimes_k k[x]/(x^d)$ has the SLP for all $d\geq 1$ if and only if $A$ is almost centered.
\end{theorem}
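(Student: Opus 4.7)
The natural framework is the $\mathfrak{sl}_2$-representation structure attached to the SLP in characteristic zero. Writing $\ell \in A_1$ for the Lefschetz element of $A$, the SLP yields a decomposition
\[
A = \bigoplus_{\alpha} U_\alpha,
\]
where each $U_\alpha$ is a cyclic $k[\ell]$-submodule (a ``string'') supported on an interval $[s_\alpha, e_\alpha]$ of the grading with one-dimensional graded pieces, and the SLP of $A$ is equivalent to these intervals being totally ordered by inclusion. Since $k[x]/(x^d)$ is itself a single string on $[0, d-1]$ with Lefschetz element $x$, my plan is to take $\ell + x$ as the candidate Lefschetz element for $A \otimes_k k[x]/(x^d)$ and analyse it string by string.

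For each $U_\alpha$, the tensor $U_\alpha \otimes_k k[x]/(x^d)$, viewed as a graded $k[\ell + x]$-module under the diagonal action, decomposes via the Clebsch--Gordan rule into strings
\[
[s_\alpha + j,\ e_\alpha + (d-1) - j], \qquad j = 0, 1, \ldots, \min(m_\alpha, d) - 1,
\]
with $m_\alpha = e_\alpha - s_\alpha + 1$. Summing over $\alpha$ presents $A \otimes_k k[x]/(x^d)$ as a direct sum of strings under $\ell + x$. Comparing this Jordan type to the canonical SLP Jordan type (determined by the Hilbert function of the tensor), $\ell + x$ is a strong Lefschetz element precisely when the resulting family of intervals is again totally nested by inclusion.

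The intervals from a single fixed $\alpha$ are automatically nested, so the only obstruction comes from pairs indexed by different $U_\alpha \supsetneq U_\beta$. A short calculation with the endpoints shows that the intervals labelled by $(\alpha, j_\alpha)$ and $(\beta, j_\beta)$ fail to be comparable precisely when $j_\alpha - j_\beta$ lies strictly between $\min(s_\beta - s_\alpha,\ e_\alpha - e_\beta)$ and $\max(s_\beta - s_\alpha,\ e_\alpha - e_\beta)$, and that such a value of $j_\alpha - j_\beta$ can actually be attained for some $d$ exactly when $|(s_\beta + e_\beta) - (s_\alpha + e_\alpha)| \geq 2$. Hence $A \otimes_k k[x]/(x^d)$ has the SLP for every $d$ if and only if the midpoints of any two strings in the $\ell$-decomposition of $A$ lie within $1/2$ of one another.

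The last step is to identify this string-midpoint condition with $A$ being almost centered. Writing $h_i = \#\{\alpha : i \in [s_\alpha, e_\alpha]\}$ and using the reformulation from Remark \ref{rem:ac_version2}, both conditions translate into the same pair of interlacing inequalities between $h_i$ and $h_{D-i}$. I expect the main technical obstacle to be the bookkeeping in this final equivalence, in particular handling the two alternative inequalities of the almost centered definition separately according as $A$ is ``left-heavy'' or ``right-heavy.'' A minor additional point is to argue that using $\ell + x$ as candidate loses no generality: in characteristic zero a generic element of $(A \otimes_k k[x]/(x^d))_1$ has the same Jordan type as $\ell + x$, so SLP of the tensor reduces to $\ell + x$ being a Lefschetz element.
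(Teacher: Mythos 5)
The paper does not prove this statement: it is Lindsey's theorem, quoted verbatim with a citation to \cite[Theorem 3.10]{Lindsey}, so there is no in-paper proof to compare against. Your plan is nevertheless the right one, and it is essentially the original argument: decompose $A$ into graded $\ell$-strings (possible since $\ell$ is a strong Lefschetz element, which is equivalent to the supporting intervals being totally ordered by inclusion), apply the Clebsch--Gordan rule to each $U_\alpha\otimes_k k[x]/(x^d)$, and reduce the SLP of the tensor product to the nesting of the resulting intervals. Your endpoint computation is correct: the intervals $[s_\alpha+j_\alpha,\,e_\alpha+(d-1)-j_\alpha]$ and $[s_\beta+j_\beta,\,e_\beta+(d-1)-j_\beta]$ are incomparable exactly when $j_\alpha-j_\beta$ lies strictly between $s_\beta-s_\alpha$ and $e_\alpha-e_\beta$, and an integer in that open interval exists (and is attainable for $d\geq m_\alpha$) precisely when $|(s_\alpha+e_\alpha)-(s_\beta+e_\beta)|\geq 2$. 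Two points in the plan carry real content that you have deferred rather than done: first, the reduction to the candidate $\ell+x$ in the ``only if'' direction needs the fact that every linear form of the tensor product splits as $\ell_B+cx$ and that the Clebsch--Gordan Jordan type is monotone in the Jordan type of $\ell_B$, so that $\ell+x$ realizes the generic Jordan type; second, and more substantially, the equivalence between ``all string midpoints within $1/2$'' and the almost centered condition on the Hilbert function is the actual combinatorial heart of the theorem and is only asserted. You flag the latter honestly as the main obstacle, and it does go through (the two alternatives in the definition correspond to whether the common midpoint class is $\{D/2\}$-centered or shifted by a half step), so I see no gap in the approach, only work left to write out.
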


In other words, you can always add a power of a new variable to an almost centered algebra and that will preserve the SLP if the original algebra has the SLP.

An important construction which we utilize in Section \ref{sect:symmetric_hs} is Harima and Watanabe's central simple modules, which we define below (c.f.
\cite{central-simple, nonstd, commutator}). This gives us another way, aside from tensor products, to simplify the problem of proving an algebra has the SLP by breaking it down into more manageable pieces. 

\begin{definition}
Let $A$ be a standard graded Artinian $k$-algebra and $\ell \in A_1$ a linear form with $r$ the smallest positive integer for which $\ell^r = 0$. Then we have the sequence 
\[
A = (0:\ell^r)+(\ell)  \supset (0:\ell^{r-1}) +(\ell) \supset \dots \supset (0:\ell)+(\ell) \supset (0:\ell^0)+(\ell) = (\ell).
\]
The $i$th \textit{central simple module} $V_{i,\ell}$ of $A$ with respect to the linear form $\ell$ is defined as the $i$th nonzero successive quotient of this sequence, so it is of the form
\[
V_{i,\ell} = \frac{(0:\ell^{f_i})+(\ell)}{(0:\ell^{f_{i}-1})+(\ell)}
\]   
where $f_1 = r$ and $f_i > f_{i+1} \geq 1$ for all $i >1$. Note that $V_{1,\ell} = A/{(0:\ell^{r-1})+(\ell)}$. We also define $\widetilde{V}_{i,\ell} = V_{i,\ell} \otimes k[t]/(t^{f_i})$ and $\widetilde{V} = \bigoplus_i \widetilde{V}_{i,\ell}$. Then, viewing $\widetilde{V}_{i,\ell}$ and $K[t]/(t^{f_i})$ as $k$-vector spaces, we have \[\HS_{\widetilde{V}_{i,\ell}}(t) = \HS_{V_{i,\ell}}(t)(1+t+\dots + t^{f_i -1})\]
which is symmetric if and only if $\HS_{V_{i,\ell}}(t)$ is symmetric \cite[Remark 2.2(1)]{nonstd}.
\end{definition}

We also recall the theorem of Harima and Watanabe giving a characterization of the strong Lefschetz property in terms of these central simple modules. 

\begin{theorem}[{\cite[Theorem 5.2]{nonstd}}]\label{hw-csm-thm}
Let $A$ be a graded Artinian $k$-algebra with a symmetric Hilbert series. Then the following are equivalent. 
\begin{enumerate}
    \item $A$ has the SLP. 
    \item There exists a linear form $\ell \in A_1$ such that for all $i$, $V_{i,\ell}$ has the SLP, a symmetric Hilbert series, and the reflecting degree of $\widetilde{V}_{i,\ell}$ coincides with that of $A$.
\end{enumerate} 
\end{theorem}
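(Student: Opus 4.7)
My plan is to exploit the combinatorial identity $\HS_A(t)=\sum_i \HS_{V_{i,\ell}}(t)(1+t+\cdots+t^{f_i-1})=\HS_{\widetilde V}(t)$, which falls out by unpacking the defining filtration of the central simple modules together with the tensor-product description of $\widetilde V_{i,\ell}$. The second essential input is the standard reformulation of SLP in terms of the Jordan type of the nilpotent endomorphism $\times\ell$: the algebra $A$ has SLP with Lefschetz element $\ell$ exactly when the partition recording the Jordan block sizes of $\times\ell$ on $A$ is the conjugate of the partition read off from the Hilbert function of $A$. I would set up both equivalences and then track how they interact with the filtration $\{(0:\ell^j)+(\ell)\}$.

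For (1)$\Rightarrow$(2), assuming $\ell$ is a strong Lefschetz element, I would show that each quotient $V_{i,\ell}$ captures precisely the Jordan blocks of $\times\ell$ of a fixed length $f_i$. The maximal rank hypothesis forces these blocks to sit symmetrically about the reflecting degree of $A$, which simultaneously yields symmetry of $\HS_{V_{i,\ell}}$ and coincidence of the reflecting degree of $\widetilde V_{i,\ell}$ with that of $A$ (the $\frac12$ ambiguity matches the parity shift introduced by multiplying by $1+t+\cdots+t^{f_i-1}$). The SLP of $V_{i,\ell}$ in the non-standard graded sense of \cite{nonstd} is then inherited by restriction: the chosen Jordan blocks are themselves the Jordan data of $\times \ell$ on this quotient, so multiplication maps have maximal rank there as well.

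For the reverse implication (2)$\Rightarrow$(1), I would first upgrade each $V_{i,\ell}$ to $\widetilde V_{i,\ell}=V_{i,\ell}\otimes k[t]/(t^{f_i})$. Since $V_{i,\ell}$ has a symmetric Hilbert series and the SLP, Stanley's and Watanabe's argument (or equivalently Lindsey's theorem, noting that symmetric implies almost centered) gives SLP for the tensor product. The coincidence of reflecting degrees guarantees that the direct sum $\widetilde V=\bigoplus_i \widetilde V_{i,\ell}$ retains the SLP, because Jordan chains from distinct summands line up around a common center and superimpose to yield maximal-rank multiplication maps. The final move is to transfer SLP from $\widetilde V$ back to $A$ using the Hilbert series identity above together with the block-by-block correspondence that $\widetilde V_{i,\ell}$ realizes the length-$f_i$ Jordan blocks of $\times\ell$ on $A$.

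The main obstacle is precisely this last transfer: while $A$ and $\widetilde V$ share a Hilbert series, they are genuinely different algebras, and one must verify that the Jordan type of $\times\ell$ on $A$ really does match the Jordan type assembled from the $\widetilde V_{i,\ell}$'s. This demands a careful bookkeeping argument showing that the filtration $\{(0:\ell^j)+(\ell)\}$ refines to an $\ell$-equivariant decomposition of $A$ whose Jordan data are in bijection with those of $\widetilde V$. Verifying this correspondence, and in particular checking that the reflecting-degree alignment is exactly what is needed to avoid any loss of rank when passing from the direct sum $\widetilde V$ to $A$, is the technical heart of the theorem as carried out in \cite{nonstd}.
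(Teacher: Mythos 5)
First, a point of comparison: the paper offers no proof of this statement at all --- it is imported verbatim as \cite[Theorem 5.2]{nonstd} --- so there is no internal argument to measure you against, only the actual Harima--Watanabe proof. Your overall strategy (encode the SLP via the Jordan type of $\times\ell$, observe that $\dim_k V_{i,\ell}$ counts the Jordan blocks of $\times\ell$ of length exactly $f_i$, and exploit the identity $\HS_A(t)=\sum_i\HS_{\widetilde V_{i,\ell}}(t)$) is indeed the skeleton of their argument, so the outline points in the right direction.

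As a proof, however, it has two genuine gaps. First, in $(1)\Rightarrow(2)$ you claim the SLP of $V_{i,\ell}$ is ``inherited by restriction'' because ``the chosen Jordan blocks are themselves the Jordan data of $\times\ell$ on this quotient.'' But $\ell$ annihilates every $V_{i,\ell}$: the denominator $(0:\ell^{f_i-1})+(\ell)$ contains $(\ell)$, so $\times\ell=0$ on $V_{i,\ell}$. The SLP asserted in condition (2) is therefore with respect to some \emph{other} linear form, and producing it from the SLP of $A$ is not a restriction argument. Second, in $(2)\Rightarrow(1)$ the Jordan type of $\times\ell$ on $A$ is completely determined by the filtration data (it has $\dim_k V_{i,\ell}$ blocks of length $f_i$) and is in general not the conjugate of the Hilbert function, so $\ell$ itself cannot be the strong Lefschetz element; one must build a new form out of $\ell$ and the Lefschetz elements of the $V_{i,\ell}$ and then invoke semicontinuity of Jordan type (or a degeneration argument) to pass from the direct sum $\widetilde V=\bigoplus_i\widetilde V_{i,\ell}$ back to $A$. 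You explicitly flag this transfer as ``the technical heart of the theorem as carried out in \cite{nonstd}'' and defer it to the reference, which means the decisive step is asserted rather than proved. As written, the proposal is a reasonable road map of where the difficulties lie, not a complete argument.
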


An important fact about central simple modules which we will use later is that $\HS_A(t) = \sum_i \HS_{\widetilde{V}_i}(t)$ (per \cite[Remark 4.1]{nonstd}). 

\section{Monomials with small support}\label{sect:small_support}
Since a full classification of the SLP for all monomial almost complete intersections is a very difficult task, in this section we will begin by assuming that the additional monomial $m$ has as small support as possible. If $I$ should be a monomial almost complete intersection, this forces $m$ to be of the form $x_i^{\alpha}x_{j}^{\beta}$ for some $i,j$ and $\alpha, \beta \geq 1$. By changing the labels of our variables, we may assume that $m=x_1^{\alpha}x_2^{\beta}$. The goal for this section is then to prove the following theorem, characterizing when such ideals have the SLP.

\begin{theorem}
\label{thm:SLP_supp_two}
Let $A=k[x_1,\dots, x_n]/(x_1^{a_1},\dots, x_n^{a_n}, x_1^{\alpha}x_2^{\beta})$ be a monomial almost complete intersection over a field of characteristic zero. Without loss of generality, assume that $a_1 + \beta \leq a_2 + \alpha$. Then $A$ has the SLP if and only if one of the following conditions is satisfied.
\begin{itemize}
\item $n=2$,
\item $n=3$ and $a_3\leq 2$,
\item $k[x_1,x_2]/(x_1^{a_1},x_2^{a_2},x_1^{\alpha}x_2^{\beta})$ is almost centered, or
\item $a_2< a_1 + \beta + 2$ and
\begin{itemize}
\item[$\star$] $a_1=\alpha+1$, or
\item[$\star$] $\beta=1$, or
\item[$\star$] $a_2\geq a_1 + \beta - 1$.
\end{itemize}
\end{itemize}
\end{theorem}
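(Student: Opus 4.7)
The central observation is that the algebra factors as the tensor product
\[
A = B\otimes_k C, \quad B=k[x_1,x_2]/(x_1^{a_1},x_2^{a_2},x_1^{\alpha}x_2^{\beta}), \quad C=k[x_3,\dots,x_n]/(x_3^{a_3},\dots,x_n^{a_n}),
\]
so by Stanley--Watanabe the factor $C$ has the SLP, and the problem reduces to a thorough study of the codimension-two algebra $B$ combined with control of how tensoring with the complete intersection $C$ affects the Lefschetz property.

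The first step is to show that $B$ itself always has the SLP; since $B$ is codimension two and monomial, a suitable general linear form (e.g.\ $x_1+x_2$) can be verified to be a Lefschetz element directly, by acting on the explicit monomial basis of $B$. Next, I would compute $\HS_B(t)$ by counting the staircase diagram determined by $(a_1,a_2)$ with the single ``hole'' at $(\alpha,\beta)$; this gives a piecewise linear Hilbert function whose plateaus and slopes are governed by $a_1, a_2, \alpha, \beta$. From this formula one can read off precisely when $B$ is almost centered, and in that case the third bullet of the theorem follows from Lindsey's theorem applied iteratively, tensoring $B$ with each $k[x_i]/(x_i^{a_i})$ for $i=3,\dots,n$. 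The case $n=2$ is just $C=k$, and the case $n=3$ with $a_3\leq 2$ splits into the trivial reduction $a_3=1$ and a short direct verification for $a_3=2$ that tensoring with $k[x_3]/(x_3^2)$ shifts $\HS_B$ in a way compatible with maximal rank in every degree.

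The subtle part is the fourth bullet, where $B$ is not almost centered yet $A$ still has the SLP for every $n$. The hypothesis $a_2<a_1+\beta+2$ pins down the shape of $\HS_B$ closely, and each of the three sub-conditions ($a_1=\alpha+1$, $\beta=1$, or $a_2\geq a_1+\beta-1$) should force the deviation from almost-centeredness to be so mild that tensoring with any monomial CI still admits a Lefschetz element; I would establish this by explicit degree-by-degree rank computations on the monomial basis of $A$, exploiting the tensor product structure of multiplication by a general linear form. The main obstacle is the converse direction: outside the listed cases one must exhibit, for every linear form $\ell$ and every power $t$, a degree $i$ at which $\cdot \ell^t : A_i \to A_{i+t}$ fails to have maximal rank. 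The plan is to locate a ``bump'' in $\HS_B$, quantified via the equivalent formulation of almost-centeredness from Remark~\ref{rem:ac_version2}, and show that this bump survives tensoring with $C$, producing a forced kernel or cokernel in $A$ in the relevant degree. Controlling how such a bump propagates under tensor product with a general CI is expected to be the most technically involved step, and should naturally produce the explicit numerical thresholds appearing in the fourth bullet.
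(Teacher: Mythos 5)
Your overall skeleton (factor $A=B\otimes_k C$, study $\HS_B$ via the staircase, use Lindsey's theorem for the almost centered case, cite the codimension-two result for $B$) matches the paper's strategy, but there are two genuine gaps. First, you have misread the role of the fourth bullet: by Lemma \ref{lem:Two_var_Hilbert}, the conditions ``$a_2<a_1+\beta+2$ and ($a_1=\alpha+1$ or $\beta=1$ or $a_2\geq a_1+\beta-1$)'' are precisely the negation of the two criteria under which $B$ fails to be almost centered, so the third and fourth bullets are \emph{equivalent}. There is no regime where $B$ fails almost-centeredness ``mildly'' yet $A$ has the SLP for every $n$; on the contrary, the theorem asserts that whenever $B$ is not almost centered and one is outside the small cases ($n=2$, or $n=3$ with $a_3\leq 2$), $A$ \emph{fails} the SLP. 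The ``subtle part'' you plan to handle by degree-by-degree rank computations is therefore either already subsumed by the almost centered case or is an attempt to prove something false. Relatedly, iterating Lindsey's Theorem 3.10 one variable at a time would require knowing that $B\otimes_k k[x_3]/(x_3^{a_3})$ is again almost centered, which you do not justify; the paper instead applies a one-shot statement (an almost centered algebra with the SLP tensored with a symmetric algebra with the SLP has the SLP) with $C$ taken to be the entire complete intersection.

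Second, for the converse you correctly isolate the hard point---showing the defect in $\HS_B$ ``survives tensoring with $C$''---but you give no mechanism, and the naive version of the claim is false: when $B$ is not almost centered but $n=3$ and $a_3\leq 2$, the algebra $A$ \emph{does} have the SLP, so the bump does not always survive. The paper's mechanism is Theorem \ref{thm:full_rank_recursion}, which converts full rank of $\cdot\ell^t$ on $B\otimes_k k[x]/(x^d)$ into the requirement that a family of maps $\cdot\overline{\ell}^{\,2q+t-(d-1)}\colon B_{i-q}\to B_{i+q+t-(d-1)}$ all have full rank \emph{for the same reason}; each of the two failure patterns of Lemma \ref{lem:Two_var_Hilbert} produces, in suitable degrees, a pair $q=0$, $q=2$ with one map forced injective-only and the other surjective-only. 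One also needs the reduction (via the fact that the SLP of $C'\otimes_k k[x]/(x^d)$ descends to $C'$) of all remaining failing cases to the single worst case $k[x_1,x_2,x_3,x_4]/(x_1^{a_1},x_2^{a_2},x_3^2,x_4^2,x_1^{\alpha}x_2^{\beta})$, which is then dispatched by applying the recursion twice. Without something playing the role of this recursion (or an equivalent Clebsch--Gordan analysis of Jordan types), the converse direction does not go through, and the $a_3=2$ case of the positive direction also remains unproved since it relies on the same tool together with the step-size bounds on $\HS_B$ from Lemma \ref{lem:Two_var_Hilbert}.
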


Note that the assumption $a_1 + \beta \leq a_2 + \alpha$ can always be achieved by relabeling the variables $x_1$ and $x_2$ if necessary, exchanging their roles. Moreover, the last two criteria in Theorem~\ref{thm:SLP_supp_two} are equivalent; the last gives explicit conditions for when $k[x_1,x_2]/(x_1^{a_1}, x_2^{a_2}, x_1^{\alpha}x_2^{\beta})$ is almost centered, as we will see in Lemma \ref{lem:Two_var_Hilbert}. 

\begin{example}
The algebra $\mathbb{Q}[x_1,x_2,x_3,x_4]/(x_1^4, x_2^6, x_3^{a_3}, x_4^{a_4}, x_1^{2}x_2^3)$ has the SLP for all $a_3,a_4\geq 2$. Indeed, in this case $a_1=4$, $a_2=6$, $\alpha=2$ and $\beta=3$ satisfies $a_1+\beta\leq a_2+ \alpha$, $a_2<a_1+\beta+2$ and $a_2\geq a_1+\beta -1$. However, if we increase the value of $\beta$ by one so that $\beta=4$, then the algebra will fail the SLP since none of the conditions in Theorem \ref{thm:SLP_supp_two} is fulfilled. 
\end{example}

One well-known result that we will use repeatedly is the following, essentially coming from the additivity of Hilbert series on short exact sequences. For a proof see e.g. \cite[Lemma 1]{Gluing_SashaSamuelLisa}.

\begin{lemma}\label{lem:Hilbert_add}
Let $K$ be a ideal of $R$ and $m$ a monomial. Then, if $I=K+(m)$ and $J=K:(m)$, we get that
\[
\HS_{R/K}(t) = \HS_{R/I}(t) + t^{\deg(m)}\HS_{R/J}(t).
\]
\end{lemma}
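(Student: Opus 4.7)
The plan is to obtain the identity as a consequence of the additivity of Hilbert series on short exact sequences of graded modules, where the short exact sequence is built from multiplication by $m$.

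First, I would consider the $R$-linear map $R \to R/K$ given by $r \mapsto \overline{rm}$. An element $r$ lies in the kernel if and only if $rm \in K$, which by definition of the colon ideal means $r \in K:(m) = J$. Thus this map factors through an injection $R/J \hookrightarrow R/K$, and since multiplying by $m$ shifts degrees by $\deg(m)$, it is homogeneous of degree $\deg(m)$; equivalently, after applying a degree shift, we obtain an injection of graded modules $R/J(-\deg m) \hookrightarrow R/K$. Its image is $(K + (m))/K$, so the cokernel is $R/(K+(m)) = R/I$.

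Assembling these pieces produces the short exact sequence of graded $R$-modules
\[
0 \longrightarrow (R/J)(-\deg m) \xrightarrow{\ \cdot m\ } R/K \longrightarrow R/I \longrightarrow 0.
\]
Hilbert series are additive along short exact sequences, and a degree shift by $\deg(m)$ multiplies the Hilbert series by $t^{\deg m}$, so the identity
\[
\HS_{R/K}(t) = \HS_{R/I}(t) + t^{\deg(m)}\HS_{R/J}(t)
\]
follows immediately.

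There is no real obstacle here; the only thing to verify carefully is that the kernel of $r \mapsto \overline{rm}$ is exactly the colon ideal $J$, which is just the definition of $K:(m)$. Everything else is routine bookkeeping with graded shifts.
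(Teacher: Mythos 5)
Your proof is correct and is exactly the standard argument the paper has in mind: it explicitly attributes the lemma to ``the additivity of Hilbert series on short exact sequences'' and defers the details to \cite[Lemma 1]{Gluing_SashaSamuelLisa}, and the short exact sequence $0 \to (R/J)(-\deg m) \xrightarrow{\cdot m} R/K \to R/I \to 0$ you construct is precisely that argument. No gaps; the identification of the kernel with $K:(m)$ and of the cokernel with $R/(K+(m))$ is handled correctly.
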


To prove Theorem \ref{thm:SLP_supp_two}, the main part of the work lies in understanding the Hilbert series of a monomial almost complete intersection in two variables. 

\begin{lemma}
\label{lem:Two_var_Hilbert}
Assume that $a+\beta\leq b+\alpha$. Then $\HS_{k[x,y]/(x^a, y^b, x^\alpha y^\beta)}(t)$ has the following properties.
\begin{itemize}
\item It is unimodal.
\item It increases by one in each degree until it reaches its maximum. 
\item After it reaches its maximum, it weakly decreases in steps of size zero, one or two.
\item The maximum is obtained in degree $\min\{a,\beta+\alpha\}-1$ (and possibly in other degrees as well). 
\item The socle degree is $b+\alpha-2$.
\item It is symmetric if and only if $a+\beta=b$.
\item It is \emph{not} almost centered if and only if one of the following criteria is satisfied.
\begin{itemize}
\item[$\star$] $b\geq a+\beta+2$
\item[$\star$] $a-\alpha\geq 2, \beta\geq 2$ and $b\leq a+\beta-2$.
\end{itemize}
\end{itemize}
\end{lemma}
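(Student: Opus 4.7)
The plan is to apply Lemma~\ref{lem:Hilbert_add} with $K=(x^a,y^b)$ and $m=x^\alpha y^\beta$. Since $(x^a,y^b):(x^\alpha y^\beta)=(x^{a-\alpha},y^{b-\beta})$, this gives
\[
\HS_A(t)=\HS_{R/(x^a,y^b)}(t)-t^{\alpha+\beta}\,\HS_{R/(x^{a-\alpha},y^{b-\beta})}(t).
\]
Each Hilbert function on the right is the familiar trapezoid $T_{A,B}(d):=\min(d+1,A,B,A+B-1-d)$ (zero outside $[0,A+B-2]$). Consequently $h_d:=\dim A_d = T_{a,b}(d)-T_{a-\alpha,b-\beta}(d-\alpha-\beta)$, and the forward slope $s(d):=h_{d+1}-h_d$ is piecewise constant with values in $\{-2,-1,0,1\}$, with breakpoints among the five integers $\min(a,b)-1$, $\max(a,b)-1$, $\alpha+\beta-1$, $a+\beta-1$, $b+\alpha-1$.

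Under the hypothesis $a+\beta\leq b+\alpha$, a finite case analysis on the mutual orderings of these five breakpoints gives an explicit description of $s(d)$. From this, most of the claims follow by inspection. The socle degree is $b+\alpha-2$, as can also be read directly from the L-shape description of the standard monomials $([0,\alpha-1]\times[0,b-1])\cup([0,a-1]\times[0,\beta-1])$, whose largest anti-diagonal is $\max(\alpha+b,a+\beta)-2=b+\alpha-2$. For the maximum location, one checks that $s(d)=+1$ on $[0,\min(a,b,\alpha+\beta)-2]$ (the first trapezoid contributes slope $+1$ while the shifted second contributes slope $0$), and that a slope-$0$ plateau then extends at least up to degree $\min(a,\alpha+\beta)-1$, so the maximum is attained there (and possibly at earlier degrees too, when $b<\min(a,\alpha+\beta)$). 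Beyond that the slope is nonpositive and lies in $\{0,-1,-2\}$, which simultaneously gives unimodality and the stated step-size bounds.

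For the symmetry criterion I would use the rational form
\[
\HS_A(t)=\frac{(1-t^a)(1-t^b)-t^{\alpha+\beta}(1-t^{a-\alpha})(1-t^{b-\beta})}{(1-t)^2}
\]
and apply the palindromic test $t^{b+\alpha-2}\HS_A(1/t)=\HS_A(t)$; after clearing the denominator this collapses to an identity that holds precisely when $b=a+\beta$. For the almost-centered criterion I would use the equivalent characterization in Remark~\ref{rem:ac_version2} applied to the explicit shape of $h$. The two listed ``bad'' regimes correspond exactly to the two ways this can fail: the condition $b\geq a+\beta+2$ makes the plateau too long compared to the descent (so some pair $h_i<h_j$ with $i<j$ violates $h_{i-s}\leq h_{j+s}$), while the conjunction $a-\alpha\geq 2$, $\beta\geq 2$, $b\leq a+\beta-2$ is exactly what produces a slope-$(-2)$ segment long enough to violate the opposite comparison. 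The main obstacle lies in this last step, where several orderings of the five breakpoints have to be tracked simultaneously and checked against both halves of the characterization in Remark~\ref{rem:ac_version2}; everything else reduces to routine piecewise-linear arithmetic.
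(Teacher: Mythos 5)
Your argument is correct, and it takes a genuinely different route through the same toolbox. Where the paper applies Lemma~\ref{lem:Hilbert_add} with $m=x^\alpha$ to write $\HS_{R/K}(t)=\HS_{R/(x^\alpha,y^b)}(t)+t^\alpha\HS_{R/(x^{a-\alpha},y^\beta)}(t)$ — a \emph{sum} of two nonnegative trapezoids, so that unimodality, the step sizes, and the location of the maximum can be read off by stacking the two shapes (Figures~\ref{fig:All_on_top} and~\ref{figs:Slanted}) — you instead apply it with $m=x^\alpha y^\beta$ to the complete intersection, obtaining the \emph{difference} $h_d=T_{a,b}(d)-T_{a-\alpha,b-\beta}(d-\alpha-\beta)$. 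Your version buys a single closed formula and a uniform list of five breakpoints, at the cost of having to rule out bad slopes by hand (e.g.\ that $s(d)=+2$ never occurs, which follows from $a+\beta-1\geq a>\min(a,b)-1$); I checked your slope computations and the identification of the increasing range $[0,\min(a,b,\alpha+\beta)-1]$ and the plateau up to $\min(a,\alpha+\beta)-1$, and they are right. Your palindromic-numerator test for symmetry is a clean alternative to the paper's reading of Figure~\ref{fig:All_on_top}. The one place where your write-up is a plan rather than a proof is the almost-centered classification: you correctly name the two failure mechanisms (the long plateau when $b\geq a+\beta+2$, and the slope-$(-2)$ segment when $a-\alpha\geq 2$, $\beta\geq 2$, $b\leq a+\beta-2$), and these match the explicit witnesses the paper uses against Remark~\ref{rem:ac_version2}, but the converse — that in \emph{all} remaining cases the interlacing inequalities such as $h_0\leq h_{b+\alpha-2}\leq h_1\leq\cdots$ actually hold — still requires the same case-by-case check over the orderings of your breakpoints that the paper performs via its figures; this is genuinely most of the work in the lemma and should not be waved off as routine, though nothing in your setup obstructs carrying it out.
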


\begin{proof}
Let $R=k[x,y]$ and $K=(x^a, y^b, x^\alpha y^\beta)$. To understand the Hilbert series of $R/K$, we will use Lemma \ref{lem:Hilbert_add}. For $m=x^\alpha$, $I=K+(m)=(x^{\alpha},y^b)$ and $J=K:(m)=(x^{a-\alpha},y^\beta)$, it gives that that
\[
\HS_{R/K}(t) = \HS_{R/I}(t) + t^\alpha\HS_{R/J}(t).
\]
Now, $I$ and $J$ are both monomial complete intersections in two variables, so we understand their Hilbert series. We have that $\HS(R/I)$ is strictly increasing by one in the interval of integers $[0, \min\{b,\alpha\}-1] \subset \mathbb{Z}$, constant in the interval $[\min\{b,\alpha\}-1,\max\{b,\alpha\}-1]$, and strictly decreasing by one in the interval $[\max\{b,\alpha\}-1, b+\alpha-2]$. Similarly for $t^\alpha\HS_{R/J}(t)$, it is strictly increasing by one in the interval $[\alpha, \min\{a,\beta+\alpha\}-1]$, constant in the interval $[\min\{a,\beta+\alpha\}-1,\max\{a,\beta+\alpha\}-1]$, and strictly decreasing by one in the interval $[\max\{a,\beta+\alpha\}-1, a+\beta-2]$. See Figure \ref{fig:General_I} and Figure \ref{fig:General_J} respectively. 

\begin{figure}[h]
\begin{minipage}{0.5\textwidth}
\begin{tikzpicture}[scale=0.9]
\draw (-2,0) -- (0,2);
\draw (0,2) -- (3,2);
\draw (3,2) -- (5,0);
\node[below] at (-2,0) {\tiny{$0$}};
\node[below] at (5,0) {\tiny{$b+\alpha-2$}};
\node[above] at (0,2) {\tiny{$\min\{b,\alpha\}-1$}};
\node[above] at (3,2) {\tiny{$\max\{b,\alpha\}-1$}};
\end{tikzpicture}
\caption{$\mathrm{HS}_{R/I}(t)$}
\label{fig:General_I}
\end{minipage}~
\begin{minipage}{0.5\textwidth}
\begin{tikzpicture}[scale=0.9]
\draw[blue] (-2,0) -- (0,2);
\draw[blue] (0,2) -- (3,2);
\draw[blue] (3,2) -- (5,0);
\node[below] at (-2,0) {\tiny{$\alpha$}};
\node[below] at (5,0) {\tiny{$a+\beta-2$}};
\node[above] at (0,2) {\tiny{$\min\{a,\beta+\alpha\}-1$}};
\node[above] at (3,2) {\tiny{$\max\{a,\beta+\alpha\}-1$}};
\end{tikzpicture}
\caption{$t^\alpha\: \mathrm{HS}_{R/J}(t)$}
\label{fig:General_J}
\end{minipage}
\end{figure}

The possible shapes for $\HS_{R/K}(t)$ are then collected in Figure \ref{fig:All_on_top} and Figure \ref{figs:Slanted} where we let blue denote the part coming from $t^\alpha\: \mathrm{HS}_{R/J}(t)$ while keeping the part of $\mathrm{HS}_{R/I}(t)$ below it with a dashed line. These are constructed by combining the facts that $a+\beta-2\leq b+\alpha-2$, giving that $t^\alpha\: \mathrm{HS}_{R/J}(t)$ must end before $\mathrm{HS}_{R/I}(t)$, together with that $t^\alpha\: \mathrm{HS}_{R/J}(t)$ starts directly at or directly after the flat top of $\mathrm{HS}_{R/I}(t)$. These restrictions then give these finitely many possible shapes. From these pictures, most of the claimed properties of $\HS_{R/K}(t)$ are immediate such as unimodality and that the socle degree is $b+\alpha-2$. The amount of increase and decrease and the location of the maximum also follows where we note that a steeper slope in the figures indicates a decrease by two in each degree. Moreover, we see that the only case when the series is symmetric happens in Figure \ref{fig:All_on_top} when $a+\beta-1=b-1$. 

It remains to determine when $\HS_{R/K}(t)$ is almost centered. Let us write 
\[
\HS_{R/K}(t) = \sum_{i=0}^{b+\alpha-2}h_i t^i.
\]
Assume first that $a+\beta-1\leq\max\{b,\alpha\}$. Since $\beta\geq 1$ and $\alpha<a$, we then know that $a+\beta-2\geq a-1>\alpha-1$, so $\max\{b,\alpha\}-1=b-1$ and we are in the case indicted in Figure \ref{fig:All_on_top}.  If $a+\beta-1=b-1$, then the series is unimodal and symmetric and thus almost centered. If $a+\beta-1=b-2$, then the series will again be almost centered. Indeed, we have $h_{b+\alpha-2}\leq h_0 \leq h_{b+\alpha-3}\leq h_1 \leq \dots$ where we must take this ordering of the coefficients of the Hilbert series and not the other to counteract the fact that the series is not symmetric anymore. One can write down the explicit relationship between the coefficients of the Hilbert series prove this, but since all info is collected in Figure \ref{fig:All_on_top}, it does not add any extra insight and is unnecessary. The case when $a+\beta-1=b$, though pictorially closer to Figure \ref{fig:First_slanted}, follows by the same logic, establishing $h_0\leq h_{b+\alpha-2}\leq h_1 \leq h_{b+\alpha-1}\leq \dots$ instead. In the case when $a+\beta-1\leq b-3$, then it can not be almost centered since then we have that $h_\alpha>h_{b-3}$ and $h_{\alpha-2}<h_{b-1}$, contradicting the almost centered property from Remark \ref{rem:ac_version2}. 

\begin{figure}[h]
\centering
\begin{tikzpicture}
\draw (-1.5,0.5) -- (0,2);
\draw[dashed] (0,2) -- (3,2);
\draw (3,2) -- (4,2);
\draw (4,2) -- (5.5,0.5);
\draw[blue] (0,2) -- (1,3);
\draw[blue] (1,3) -- (2,3);
\draw[blue] (2,3) -- (3,2);
\node[below] at (-1.5,0.5) {\tiny{$0$}};
\node[below] at (5.5,0.5) {\tiny{$b+\alpha-2$}};
\node[above left] at (0,2) {\tiny{$\alpha-1$}};
\node[above] at (4,2) {\tiny{$b-1$}};
\node[below] at (3,2) {\tiny{$a+\beta-1$}};
\end{tikzpicture}
\caption{$\HS_{R/K}(t)$ when $a+\beta-2\leq b-1$}
\label{fig:All_on_top}
\end{figure}
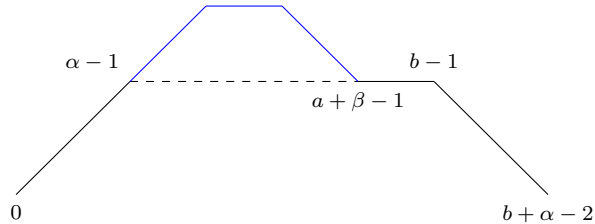

Suppose now that $\max\{b,\alpha\}+1\leq a+\beta-1$, so that we are in one of the remaining cases in Figure \ref{figs:Slanted}. Since we always have that $a+\beta-1\geq a \geq \alpha+1$, we may equivalently assume that $b+1\leq a+\beta-1$. If $a-\alpha=1$ or $\beta=1$, then $t^\alpha\: \mathrm{HS}_{R/J}(t)$ has only ones as its non-zero coefficients. Adding that as described to $\HS_{R/I}(t)$ then gives that $\HS_{R/K}(t)$ is almost centered since the inequalities $h_0\leq h_{b+\alpha-2}\leq h_1 \leq h_{b+\alpha-3}\leq \dots$ will hold. This follows since $\HS_{R/I}(t)$ is symmetric and unimodal, and increasing the coefficients of some consecutive numbers in such as sequence by one can then not ruin the almost centered property. 

Hence we may assume that $a-\alpha\geq 2$ and $\beta\geq 2$ while still $b+1\leq a+\beta-1$ for our final case. But if $a+\beta-1\geq b+1$, then it cannot be almost centered as the assumptions $a-\alpha\geq 2$ and $\beta\geq 2$ ensures that the coefficient of $t^{a+\beta-3}$ in $t^\alpha\: \mathrm{HS}_{R/J}(t)$ is $2$, giving that $h_{a+\beta-3}=h_{b+\alpha-a-\beta+1}+2 = h_{b+\alpha-a-\beta+2} +1$, and we need $h_{a+\beta-3}\leq h_{b+\alpha-a-\beta+1}$ or $h_{a+\beta-3}\leq h_{b+\alpha-a-\beta+2}$ to be almost centered. Hence it fails to be almost centered exactly in the desired cases.
\end{proof}



\begin{figure}[h]
\begin{subfigure}{0.5\textwidth}
\centering
\begin{tikzpicture}[scale=0.85]
\draw (-1.5,-0.5) -- (1,2);
\draw (1,2) -- (3,2);
\draw[dashed] (3,2) -- (5,0);
\draw (5,0) -- (5.5,-0.5);
\draw[blue] (3,2) -- (3.75,2);
\draw[blue] (3.75,2) -- (4.25,1.5);
\draw[blue] (4.25,1.5) -- (5,0);
\node[below] at (-1.5,-0.5) {\tiny{$0$}};
\node[below] at (5.5,-0.5) {\tiny{$b+\alpha-2$}};
\node[above] at (1,2) {\tiny{$b-1$}};
\node[above] at (3,2) {\tiny{$\alpha-1$}};
\node[above right] at (5,0) {\tiny{$a+\beta-1$}};
\end{tikzpicture}
\caption{$b\leq \alpha$}
\label{fig:All_on_right}
\end{subfigure}~
\begin{subfigure}{0.5\textwidth}
\centering
\begin{tikzpicture}[scale=0.85]
\draw (-1.5,0.5) -- (0,2);
\draw[dashed] (0,2) -- (4,2);
\draw[dashed] (4,2) -- (5,1);
\draw (5,1) -- (5.5,0.5);
\draw[blue] (0,2) -- (2,4);
\draw[blue] (2,4) -- (3,4);
\draw[blue] (3,4) -- (4,3);
\draw[blue] (4,3) -- (5,1);
\node[below] at (-1.5,0.5) {\tiny{$0$}};
\node[below] at (5.5,0.5) {\tiny{$b+\alpha-2$}};
\node[above left] at (0,2) {\tiny{$\alpha-1$}};
\node[above left] at (4,2) {\tiny{$b-1$}};
\node[above right] at (5,1) {\tiny{$a+\beta-1$}};
\end{tikzpicture}
\caption{$\alpha\leq b$}
\label{fig:First_slanted}
\end{subfigure}
\begin{subfigure}{0.5\textwidth}
\centering
\begin{tikzpicture}[scale=0.85]
\draw (-2.5,-0.5) -- (0,2);
\draw[dashed] (0,2) -- (3,2);
\draw[dashed] (3,2) -- (5,0);
\draw (5,0) -- (5.5,-0.5);
\draw[blue] (0,2) -- (1,3);
\draw[blue] (1,3) -- (3,3);
\draw[blue] (3,3) -- (4,2);
\draw[blue] (4,2) -- (5,0);
\node[below] at (-2.5,-0.5) {\tiny{$0$}};
\node[below] at (5.5,-0.5) {\tiny{$b+\alpha-2$}};
\node[above left] at (0,2) {\tiny{$\alpha-1$}};
\node[above] at (3,2) {\tiny{$b-1$}};
\node[above right] at (5,0) {\tiny{$a+\beta-1$}};
\end{tikzpicture}
\caption{$\alpha\leq b$}
\label{fig:Second_slanted}
\end{subfigure}~
\begin{subfigure}{0.5\textwidth}
\centering
\begin{tikzpicture}[scale=0.85]
\draw (-2.5,-0.5) -- (0,2);
\draw[dashed] (0,2) -- (0.5,2);
\draw[dashed] (0.5,2) -- (2.5,0);
\draw (2.5,0) -- (3,-0.5);
\draw[blue] (0,2) -- (0.5,2.5);
\draw[blue] (0.5,2.5) -- (1,2.5);
\draw[blue] (1,2.5) -- (1.5,2);
\draw[blue] (1.5,2) -- (2.5,0);
\node[below] at (-2.5,-0.5) {\tiny{$0$}};
\node[below] at (3,-0.5) {\tiny{$b+\alpha-2$}};
\node[left] at (0,2) {\tiny{$\alpha-1$}};
\node[right] at (0.5,2) {\tiny{$b-1$}};
\node[above right] at (2.5,0) {\tiny{$a+\beta-1$}};
\end{tikzpicture}
\caption{$\alpha\leq b$}
\label{fig:Final_slanted}
\end{subfigure}
\caption{$\HS_{R/K}(t)$ when $b-1\leq a+\beta-2$}
\label{figs:Slanted}
\end{figure}
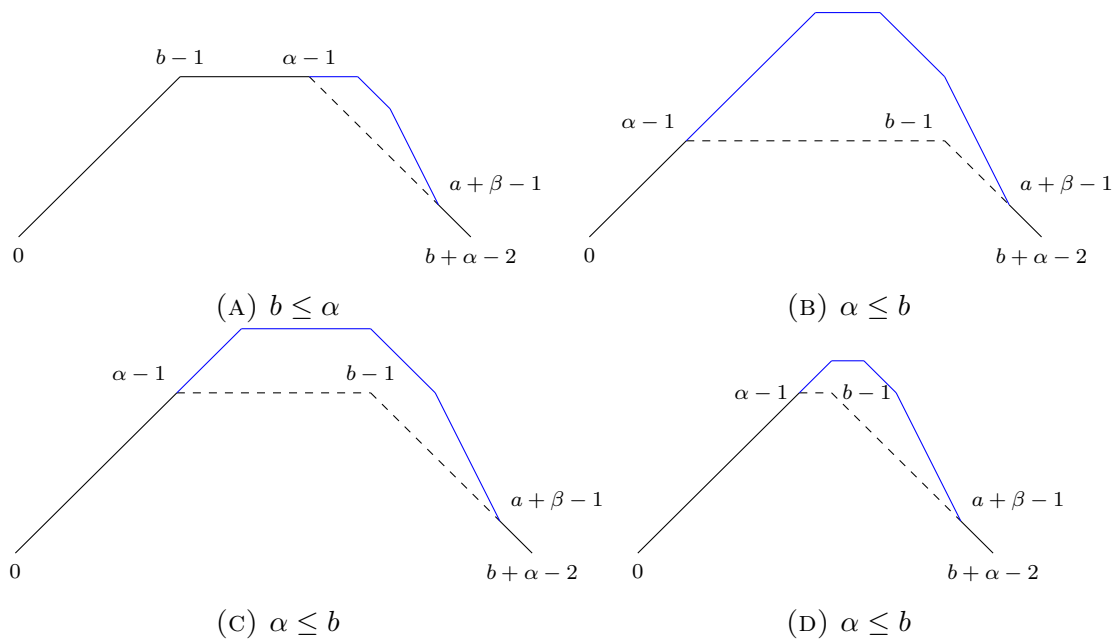

For some of the forthcoming proofs, we will need a result in the same spirit as the Clebsch-Gordan Theorem (see e.g. \cite[Proposition 3.66]{Lefschetz_book}), which gives a formula for the Jordan type of a linear form in a tensor product of two algebras in terms the Jordan types of a linear form in each algebra separately. However, the following theorem is more directly applicable to our purposes.  



\begin{theorem}[{\cite[Theorem 3.6]{full_rank_recursion}}]\label{thm:full_rank_recursion}
Let $A=B\otimes_k k[x]/(x^d)$ for some $d\geq 1$ where $B$ is an Artinian $k$-algebra and $k$ is a field of characteristic zero. For $i,t\geq 0$ and $\overline{\ell}\in B_1$ a general linear form, we have that the multiplication $\cdot \ell^t:A_i \to A_{i+t}$ for a general linear form $\ell = \overline{\ell} + x\in A_1$ has full rank if and only if all the maps
\[
\cdot \overline{\ell}^{2q + t -(d-1)}:B_{i-q} \to B_{i+q+t-(d-1)}
\]
where $q$ goes from $\max\{0,d-t\}$ to $d-1$ has full rank for the same reason. 
\end{theorem}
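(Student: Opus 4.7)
The plan is to analyze the multiplication map $\cdot \ell^t \colon A_i \to A_{i+t}$ by exploiting the tensor product structure $A = B \otimes_k k[x]/(x^d)$. Writing $A_i = \bigoplus_{j=0}^{d-1} B_{i-j} \otimes x^j$ (with the convention $B_k = 0$ for $k < 0$) and expanding $\ell^t = (\overline{\ell}+x)^t$ by the binomial theorem realizes this map as a lower-triangular block matrix whose entries are scalar multiples of powers of $\overline{\ell}$: the $(k,j)$-block (for $j \leq k$ and $k-j \leq t$) is $\binom{t}{k-j}\overline{\ell}^{t-(k-j)} \colon B_{i-j} \to B_{i+t-k}$, and zero otherwise. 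Since $\overline{\ell}$ is a general linear form on $B$, the $k[\overline{\ell}]$-module $B$ splits into graded Jordan blocks and the block matrix splits compatibly, so it suffices to treat the case where $B$ is a single graded Jordan block of length $a$ starting in degree $s$.

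For such a single Jordan block $B$, the classical Clebsch--Gordan theorem admits a graded refinement: $B \otimes_k k[x]/(x^d)$ decomposes under the action of $\ell = \overline{\ell}+x$ into exactly $\min(a,d)$ Jordan blocks, and the $m$-th one (for $m = 0, 1, \dots, \min(a,d)-1$) has length $a+d-1-2m$ and occupies the graded range $[s+m,\, s+a+d-2-m]$. This refinement can be proved by constructing top generators one degree at a time: the degree-$s$ generator of $B$ tensored with $1$ generates the longest Jordan block of length $a+d-1$, and successively one picks a generator in degree $s+m$ not lying in the sum of the previously constructed Jordan blocks, which amounts to inverting a triangular system of binomial coefficients that is nonsingular in characteristic zero.

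With this decomposition in hand, $\cdot \ell^t$ restricted to the $m$-th Jordan block is either zero (when $i$ or $i+t$ falls outside $[s+m,\, s+a+d-2-m]$) or an isomorphism of one-dimensional pieces. Full rank of $\cdot \ell^t \colon A_i \to A_{i+t}$ therefore reduces to a uniform injectivity or surjectivity condition across these Clebsch--Gordan blocks. A direct inequality calculation comparing the interval $[s+m,\, s+a+d-2-m]$ of the $m$-th Jordan block of $\ell$ on $A$ with the interval $[s,\, s+a-1]$ of the parent Jordan block of $\overline{\ell}$ on $B$ then shows that the collection of these conditions is precisely equivalent to the full-rank conditions for the maps $\cdot \overline{\ell}^{2q+t-(d-1)} \colon B_{i-q} \to B_{i+q+t-(d-1)}$. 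Moreover, the range $\max\{0,d-t\} \leq q \leq d-1$ is exactly the set of $q$ for which the exponent $2q+t-(d-1)$ is nonnegative and for which the corresponding condition is nontrivial.

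The main obstacle is the graded refinement of Clebsch--Gordan together with the bookkeeping needed to identify, \emph{for the same reason} (injective vs.\ surjective), the full-rank property of $\cdot \ell^t$ with the analogous properties of all the maps $\cdot \overline{\ell}^{2q+t-(d-1)}$ simultaneously. In particular, one must verify both directions carefully: the dimensions of the source and target of $\cdot \ell^t$ on $A$ do not split cleanly as sums of the corresponding dimensions on $B$, so the Clebsch--Gordan structure is essential for matching blocks of $\ell$ on $A$ to the relevant maps on $B$. An alternative route avoids Jordan types entirely by performing explicit block row and column operations on the triangular matrix from the first step, reducing $\cdot \ell^t$ to a form whose rank can be read off directly from the claimed maps on $B$; this replaces the conceptual Clebsch--Gordan input with purely combinatorial linear algebra.
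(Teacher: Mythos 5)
The paper does not actually prove this statement: it is quoted verbatim from \cite[Theorem 3.6]{full_rank_recursion} and used as a black box, so there is no internal proof to compare your attempt against. Judging the outline on its own merits, the strategy is the standard and correct one. Reducing to the case where $B$ is a single graded Jordan string of $\overline{\ell}$ is legitimate, since $\overline{\ell}$ is general and the whole setup is compatible with the direct sum decomposition of $B$ into strings, and the graded refinement of Clebsch--Gordan you invoke (the $m$-th component of a string of length $a$ starting in degree $s$ tensored with $k[x]/(x^d)$ has length $a+d-1-2m$ and occupies degrees $[s+m,\,s+a+d-2-m]$) is true in characteristic zero.

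However, essentially all of the content of the theorem is concentrated in the step you call ``a direct inequality calculation,'' and as written it is only asserted. Two concrete warnings. First, the matching is not the naive one: for a single string in degrees $[s,s+a-1]$, the condition ``the $m$-th Clebsch--Gordan string is alive in degree $i$ but dead in degree $i+t$'' and the condition ``$\cdot\overline{\ell}^{2q+t-(d-1)}:B_{i-q}\to B_{i+q+t-(d-1)}$ fails injectivity'' agree in two of their three defining inequalities when $q=m$ but differ in the third ($i\le s+a+d-2-m$ versus $i\le s+a-1+q$). So what must be proved is that a bad $m$ exists if and only if a bad $q$ exists in the stated range (separately for injectivity and for surjectivity), which is a genuine, if elementary, interval-intersection argument in both directions; this is exactly where the bounds $q\ge\max\{0,d-t\}$, $q\le d-1$ and $m\le\min(a,d)-1$ are consumed. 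Second, your stated justification for the range of $q$ is incorrect: the exponent $2q+t-(d-1)$ is nonnegative for all $q\ge(d-1-t)/2$, which is strictly weaker than $q\ge d-t$ (for $d=4$, $t=1$, $q=2$ the exponent is $2$, yet $q=2$ is excluded from the range). The lower bound $q\ge d-t$ must come out of the same bookkeeping, not out of nonnegativity of the exponent. So the proposal is a viable sketch with the right ingredients, but the decisive combinatorial verification is missing, and the one concrete justification offered for the range of $q$ does not hold.
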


Having full rank for the same reason means here that either all of the maps are injective, or all of the maps are surjective. With all of this info on the Hilbert series in the two variable case, we can now start saying things about Lefschetz properties for $k[x,y,z]/(x^a, y^b, z^c, x^\alpha y^\beta)$.

\begin{lemma}
\label{lem:3_var_has_SLP}
The algebra $A=k[x,y,z]/(x^a, y^b, z^c, x^\alpha y^\beta)$ has the SLP if $c\leq 2$ or $B=k[x,y]/(x^a, y^b, x^\alpha y^\beta)$ is almost centered.
\end{lemma}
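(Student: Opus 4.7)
The plan is to treat the two sufficient conditions separately, exploiting that $A = B \otimes_k k[z]/(z^c)$ where $B = k[x,y]/(x^a, y^b, x^\alpha y^\beta)$. A common starting point is the classical fact that every Artinian standard-graded quotient of $k[x,y]$ in characteristic zero has the SLP, so in particular $B$ does.

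If $B$ is almost centered, then Lindsey's theorem quoted earlier gives immediately that $A$ has the SLP for every $c \geq 1$, which is in fact stronger than what we need.

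For the case $c \leq 2$, the situation $c = 1$ is immediate since then $A \cong B$. For $c = 2$, I would apply Theorem \ref{thm:full_rank_recursion} with $d = 2$: writing $\ell = \overline{\ell} + z$ for a general $\overline{\ell} \in B_1$, it suffices to check that for every $i \geq 0$ and $t \geq 0$, the maps
\[
\cdot \overline{\ell}^{2q+t-1}: B_{i-q} \to B_{i+q+t-1}, \qquad q \in \{\max(0,2-t), \dots, 1\},
\]
all have full rank for the same reason. For $t \leq 1$ the range of $q$ is a singleton so there is nothing to check; for $t \geq 2$, one compares the two maps $\cdot \overline{\ell}^{t-1}: B_i \to B_{i+t-1}$ and $\cdot \overline{\ell}^{t+1}: B_{i-1} \to B_{i+t}$, both of which have full rank because $B$ itself has the SLP. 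The only possible failure mode is an inconsistency where, writing $h_j = \dim_k B_j$, one has $h_i < h_{i+t-1}$ while $h_{i-1} > h_{i+t}$, or the symmetric reversal.

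The main obstacle is therefore ruling out such inconsistencies, i.e., verifying a shift-by-one version of the almost centered property for $h$. I would do this directly from the shape of $\HS_B$ recorded in Lemma \ref{lem:Two_var_Hilbert}: $h$ is unimodal, ascends by exactly $1$ up to its first peak $p$, sits on a plateau $[p,p']$ at the maximum value, and then descends in steps of size $0$, $1$, or $2$. A short case analysis splitting the indices $i-1,\,i,\,i+t-1,\,i+t$ according to which of these three regions they fall in verifies both required implications, the essential point being that the ascent's unit steps together with the descent's drops of at most two prevent a shift by one from reversing the direction of a strict inequality. This settles consistency and completes the proof via Theorem \ref{thm:full_rank_recursion}.
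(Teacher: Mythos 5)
Your proposal is correct and follows essentially the same route as the paper: $c=1$ and the almost centered case are dispatched exactly as you say, and for $c=2$ the paper also applies Theorem \ref{thm:full_rank_recursion} with $q\in\{0,1\}$ and rules out the inconsistency $h_i<h_{i+t-1}$, $h_{i-1}>h_{i+t}$ (and its reversal) using precisely the two facts you isolate from Lemma \ref{lem:Two_var_Hilbert} — unit steps on the ascent, so $h_{i-1}=h_i-1$ there, and drops of at most two on the descent, so $h_{i+t}\geq h_{i+t-1}-2$. You should still write out the short case split on the sign of $h_i-h_{i+t-1}$, but the argument is the paper's.
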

\begin{proof}
First, recall that any Artinian algebra in two variables over a field of characteristic zero has the SLP \cite[Proposition 4.4]{Codim_2}. Hence $B$ always has the SLP and so does $A$ when $c=1$. When $B$ is almost centered, we get that $A=B\otimes_k k[z]/(z^c)$ has the SLP for any $c\geq 1$ by Lindsey's theorem \cite[Theorem 3.10]{Lindsey}. We are now left with the case $c=2$, where we will use Theorem \ref{thm:full_rank_recursion}. It gives that $\cdot \ell^t:A_i \to A_{i+t}$ has full rank if $\cdot \overline{\ell}^{t-1}:B_{i}\to B_{i+t-1}$ and $\cdot \overline{\ell}^{t+1}:B_{i-1}\to B_{i+t}$ have full rank for the same reason. Since $B$ has the SLP, we only need to check that they do not have full rank for different reasons. Let $h_i=\dim_k(B_i)$. If $h_i=h_{i+t-1}$, then the first map is bijective and it does not matter if the second map is injective or surjective. If $h_i<h_{i+t-1}$, then using Lemma \ref{lem:Two_var_Hilbert}, $h_i$ must be in the increasing part of the Hilbert series, so $h_{i-1}=h_i-1$. Next, we know from the same lemma that $h_{i+t}\geq h_{i+t-1}-2$ since the Hilbert series can decrease by steps of at most two. Hence we get that $$h_{i-1}=h_i -1 \leq h_{i+t-1}-2\leq h_{i+t}$$ and both maps are injective. Finally, if $h_i>h_{i+t-1}$, then as the series is unimodal, we know that $h_{i+t-1}\geq h_{i+t}$. Moreover, since either $h_{i-1}\geq h_i$ or $h_{i-1}=h_i-1$, it follows that $$h_{i-1}\geq h_i -1 \geq h_{i+t-1}\geq h_{i+t}$$ and both maps are surjective. Thus both maps are always of full rank for the same reason and $A$ has the SLP. 
\end{proof}

Having established the cases when $k[x,y,z]/(x^a, y^b, z^c, x^\alpha y^\beta)$ has the SLP, we now turn our attention to the cases when it fails the SLP.

\begin{lemma}
\label{lem:3_var_failing}
Fix some integer $c\geq 3$ and let $A=k[x,y,z]/(x^a, y^b, z^c, x^\alpha y^\beta)$. If $B=k[x,y]/(x^a, y^b, x^\alpha y^\beta)$ is not almost centered, then $A$ fails the SLP.
\end{lemma}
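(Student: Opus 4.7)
The plan is to apply Theorem~\ref{thm:full_rank_recursion} to the decomposition $A = B \otimes_k k[z]/(z^c)$. Since $B$ has the SLP (being a codimension-two algebra in characteristic zero), every individual map $\cdot \bar\ell^r: B_j \to B_k$ for a general $\bar\ell \in B_1$ has full rank, determined purely by $\dim_k B_j$ versus $\dim_k B_k$. It therefore suffices to exhibit, for each $c \geq 3$, one pair $(i_A, t_A)$ such that the induced collection of $B$-maps contains both a strictly injective map and a strictly surjective map; they cannot share a common reason, so Theorem~\ref{thm:full_rank_recursion} forces $\cdot \ell^{t_A}: A_{i_A} \to A_{i_A + t_A}$ to fail full rank for a generic $\ell = \bar\ell + z$, and $A$ therefore fails the SLP.

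The required pair of $B$-maps is extracted directly from the proof of Lemma~\ref{lem:Two_var_Hilbert}, which in each of the two failure cases exhibits a witness to failing almost-centeredness with shift exactly $s = 2$. Concretely, when $b \geq a + \beta + 2$ set $(i_0, j_0) = (\alpha, b - 3)$; then $h_{i_0} = \alpha + 1 > \alpha = h_{j_0}$ while $h_{i_0 - 2} \leq \alpha - 1 < \alpha = h_{j_0 + 2}$ (with the source of the second comparison understood as zero if $\alpha = 1$). When $a - \alpha \geq 2$, $\beta \geq 2$, and $b \leq a + \beta - 2$, set $i_0 = b + \alpha - a - \beta + 2$ and $j_0 = a + \beta - 3$; a piecewise computation using the decomposition $\HS_B = \HS_{R/(x^\alpha, y^b)} + t^\alpha \HS_{R/(x^{a-\alpha}, y^\beta)}$ from Lemma~\ref{lem:Two_var_Hilbert} yields $h_{i_0} = h_{j_0} - 1$ but $h_{i_0 - 2} = h_{j_0 + 2} + 1$. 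Verifying the latter case, particularly that the indices $i_0 - 2$ and $j_0 + 2$ fall on the expected ascending and descending portions of the series so that the relevant Hilbert values evaluate as stated, is the main technical step.

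Now fix such $(i_0, j_0)$ and set $i_A := i_0$, $t_A := j_0 - i_0 + c - 1$. The $q = 0$ term in Theorem~\ref{thm:full_rank_recursion} is the map $\cdot \bar\ell^{j_0 - i_0}: B_{i_0} \to B_{j_0}$, and the $q = 2$ term is $\cdot \bar\ell^{j_0 - i_0 + 4}: B_{i_0 - 2} \to B_{j_0 + 2}$. Both $q = 0$ and $q = 2$ lie in the required range $[\max\{0, c - t_A\}, c - 1]$: the upper bound uses $c \geq 3$, and the lower bound reduces via a short check to $j_0 \geq i_0 + 1$, which holds from $b \geq \alpha + 4$ in the first case and from $a + \beta \geq \alpha + 4$ in the second. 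By construction, one of the two $B$-maps is strictly injective (or vacuously injective with a nonzero target) and the other is strictly surjective, so they have incompatible reasons and Theorem~\ref{thm:full_rank_recursion} forces $\cdot \ell^{t_A}: A_{i_A} \to A_{i_A + t_A}$ to fail full rank, completing the proof.
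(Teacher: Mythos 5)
Your proposal is correct and follows essentially the same route as the paper's proof: decompose $A = B \otimes_k k[z]/(z^c)$, apply Theorem~\ref{thm:full_rank_recursion}, and in each of the two failure cases from Lemma~\ref{lem:Two_var_Hilbert} exhibit degrees where the $q=0$ and $q=2$ maps are respectively surjective-only and injective-only (your second case uses exactly the paper's indices, and your first-case choice $j_0=b-3$ in place of the paper's $a+\beta-1$ is an immaterial variation, both landing on the flat part of height $\alpha$). The Hilbert-function values you assert check out under the standing normalization $a+\beta\leq b+\alpha$, which you use implicitly via the statement of Lemma~\ref{lem:Two_var_Hilbert} and should perhaps state explicitly.
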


\begin{proof}
As always, we may assume that $a+\beta\leq b+\alpha $ after possibly changing the roles of $x$ and $y$. By Lemma \ref{lem:Two_var_Hilbert}, we then have two cases for when $B$ fails to be almost centered. Assume first that $b\geq a+\beta+2$. If we set $i=\alpha $ and $t=a+\beta+c-\alpha -2$, we then claim that $\cdot \ell^t:A_i \to A_{i+t}$ will not have full rank. By Theorem \ref{thm:full_rank_recursion}, it suffices to show that there is two $q$ in the interval from $\max\{0,c-t\}$ to $c-1$ such that $\cdot \overline{\ell}^{2q+t-(c-1)}:B_{i-q}\to B_{i+q+t-(c-1)}$ have full rank for different reasons for those values of $q$. The Hilbert series of $B$ then has a shape as in Figure \ref{fig:All_on_top}. Since $c-t=(2-\beta)-(a-\alpha )\leq 0$ and $c-1\geq 2$, we may look at $q=0$ and $q=2$. Let $h_j=\dim_k(B_j)$. Then $q=0$ gives a map going from a space of dimension $h_\alpha $ to one of dimension $h_{a+\beta-1}$. From $b\geq a+\beta+2$, we then know that $h_\alpha =h_{a+\beta-1}+1$, so the map is only surjective. However, $q=2$ gives a map from a space of dimension $h_{\alpha -2}$ to one of dimension $h_{a+\beta+1}$ where our restriction on $b$ gives that $h_{\alpha -2}=h_{a+\beta+1}-1$. Thus the map is only injective, so $q=0$ and $q=2$ do not have full rank for the same reason and $\cdot \ell^t:A_i \to A_{i+t}$ does not have full rank. 

The other case when $A$ is not almost centered comes from when $a-\alpha \geq 2$, $\beta\geq 2$ and $b\leq a+\beta-2$. Then $B$ has a shape as in Figure \ref{figs:Slanted}. Let $w=(a+\beta-3)-(b+\alpha -a-\beta+2)$. Then $w\geq 1$ since $a+\beta-3\geq \alpha +1$ by $a-\alpha \geq 2$ and $\beta\geq 2$, while 
\[
b+\alpha -a-\beta+2 = \alpha  + (b - (a+\beta-2))\leq \alpha . 
\]
The key property that we will use here to establish that $A$ fails the SLP is that $h_{b+\alpha -a-\beta+2}<h_{a+\beta-3}$ and $h_{b+\alpha -a-\beta}>h_{a+\beta-1}$. Indeed, if we let $i=b+\alpha -a-\beta+2$ and $t=w + c -1$, then $\cdot \ell^t:A_i \to A_{i+t}$ can not have full rank since by Theorem~\ref{thm:full_rank_recursion}, we would need $\cdot \overline{\ell}^{2q+t-(c-1)}:B_{i-q}\to B_{i+q+t-(c-1)}$ to have full rank for the same reason at least for $q=0$ and $q=2$. Note that these are required since $\max\{0,c-t\}=\max\{0,1-w\}=0\leq 2 \leq c-1$. But $q=0$ gives a map that can only be injective since $h_{b+\alpha -a-\beta+2}<h_{a+\beta-3}$, and $q=2$ gives a map that can only be surjective since $h_{b+\alpha -a-\beta}>h_{a+\beta-1}$, giving that $\cdot \ell^t:A_i \to A_{i+t}$ does not have full rank.
\end{proof}

With the three variable case fully understood, we may now prove the main result of this section.

\begin{proof}[Proof of Theorem \ref{thm:SLP_supp_two}]
When $n=1$ there are no monomial almost complete intersections and when $n=2$, we know from \cite[Proposition 4.4]{Codim_2} that any Artinian algebra has the SLP. If $n=3$ and $a_3\leq 2$, the SLP follows by Lemma \ref{lem:3_var_has_SLP}. Next, if $B=k[x_1,x_2]/(x_1^{a_1}, x_2^{a_2}, x_1^{\alpha}x_2^{\beta})$ is almost centered, then
\[
A=k[x_1,x_2]/(x_1^{a_1}, x_2^{a_2}, x_1^{\alpha}x_2^{\beta}) \otimes_k k[x_3,\dots, x_n]/(x_3^{a_3},\dots, x_n^{a_n})
\]
has the SLP since the tensor product of two algebras with the SLP where one is almost centered and the other is symmetric has the SLP by \cite[Theorem 3.5]{Lindsey}. Because the last two conditions are equivalent by Lemma \ref{lem:Two_var_Hilbert}, $A$ has the SLP also in the last case.

For the converse we note that if we have an algebra of the form $C\otimes_k k[x]/(x^d)$ that has the SLP for some $d\geq 1$, then $C$ must have the SLP by \cite[Proposition~5.3]{full_rank_recursion}. Hence the only algebras that we are left to show fails the SLP are those of the form $A=k[x_1, x_2, x_3, x_4]/(x_1^{a_1}, x_2^{a_2}, x_3^2, x_4^2, x_1^{\alpha} x_2^{\beta})$ where $B=k[x_1,x_2]/(x_1^{a_1}, x_2^{a_2}, x_1^{\alpha} x_2^{\beta})$ is not almost centered. Consider a map $\cdot \ell^t:A_i \to A_{i+t}$. If this has full rank, then two applications of Theorem \ref{thm:full_rank_recursion} gives that the maps
\begin{equation}
\begin{split}
\label{eq:maps_suare_recursion}
\cdot \overline{\ell}^{t-2}&:B_{i}\to B_{i+t-2}\\
\cdot \overline{\ell}^{t}&:B_{i-1}\to B_{i+t-1}\\
\cdot \overline{\ell}^{t+2}&:B_{i-2}\to B_{i+t}
\end{split}
\end{equation}
have full rank for the same reason. But when $B$ fails to be almost centered because $a_2\geq a_1 + \beta + 2$, then we know from the proof of Lemma \ref{lem:3_var_failing} that the first and last of the maps in \eqref{eq:maps_suare_recursion} do not have full rank for the same reason when $i=\alpha$ and $t=a_1+\beta-\alpha+1$. Similarly, when $B$ fails to be almost centered because $a_1-\alpha\geq 2$, $\beta\geq 2$ and $a_2\leq a_1+\beta-2$, then the proof of Lemma \ref{lem:3_var_failing} again gives that the first and last map in \eqref{eq:maps_suare_recursion} do not have full rank for the same reason when $i=a_2+\alpha-a_1-\beta+2$ and $t=a_1+\beta-1-i$. Since those are the only cases when $B$ can fail to be almost centered by Lemma \ref{lem:Two_var_Hilbert}, we get that $A$ fails the SLP in these cases and we are done.
\end{proof}

\section{MACIs with symmetric Hilbert series}\label{sect:symmetric_hs}

In this section we will prove that monomial almost complete intersections with symmetric Hilbert series have the SLP. The main tool is the central simple modules of Harima and Watanabe.



To begin with, we will need the following basic result for symmetric polynomials.

\begin{lemma}\label{lem:sym_product}
Let $p,q,r\in k[t]$ be three nonzero polynomials such that $pq=r$. Then two of them are symmetric if and only if all three are symmetric.
\end{lemma}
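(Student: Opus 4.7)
The plan is to encode symmetry by an involution on nonzero polynomials and use that this involution is multiplicative. For a nonzero $f\in k[t]$, let $m(f)$ and $M(f)$ denote the smallest and largest exponents whose coefficient in $f$ is nonzero, and define the \emph{reversal}
\[
f^*(t) := t^{m(f)+M(f)}\, f(1/t).
\]
Then $f^*$ is again a polynomial with $m(f^*) = m(f)$ and $M(f^*) = M(f)$, and $f$ is symmetric precisely when $f = f^*$.

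The key step is to verify that $(pq)^* = p^*\, q^*$ for any nonzero $p,q\in k[t]$. This reduces to checking $m(pq) = m(p) + m(q)$ and $M(pq) = M(p) + M(q)$, which holds because $k[t]$ is an integral domain: the coefficients of $pq$ at the extreme exponents are products of the extreme coefficients of $p$ and $q$, and therefore nonzero. Once these two equalities are in place, a direct substitution of $1/t$ into $pq$ gives the claimed identity $(pq)^* = p^*\, q^*$.

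With this identity, the lemma follows quickly. The direction "all three symmetric implies two are symmetric" is trivial, so only the "two implies three" direction needs to be argued, and it splits into three cases. If $p = p^*$ and $q = q^*$, then $r = pq = p^*q^* = (pq)^* = r^*$. If $p = p^*$ and $r = r^*$, then $pq = r = r^* = p^*q^* = p\, q^*$, and cancelling the nonzero $p$ in the domain $k[t]$ yields $q = q^*$. The case where $q$ and $r$ are symmetric is handled by the same argument with the roles of $p$ and $q$ swapped.

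There is essentially no obstacle; the only nontrivial point is the multiplicativity $(pq)^* = p^*q^*$, which itself rests on $k[t]$ being a domain, and the rest is cancellation in that domain.
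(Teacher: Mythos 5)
Your proof is correct and follows essentially the same route as the paper: both encode symmetry via the substitution $t\mapsto 1/t$ (the paper after first stripping the common power of $t$, you by building the shift $t^{m(f)+M(f)}$ into a reversal operator) and both conclude the "two implies three" direction by cancellation in the domain $k[t]$. Packaging the key identity as the multiplicativity $(pq)^*=p^*q^*$ is a tidy reformulation but not a different argument.
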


\begin{proof}
By canceling the largest possible factor of $t$ from both sides of the equality $pq=r$, we may assume that all polynomials have a nonzero constant term. Note that a polynomial $p$ with nonzero constant term is symmetric if and only if 
\[
p(t) = t^{\deg p} p(1/t).
\]
Now, if $p$ and $q$ are symmetric, then
\[
r(t) = p(t)q(t) = t^{\deg p + \deg q} p(1/t) q(1/t) = t^{\deg r} r(1/t),
\]
showing that $r$ is symmetric. If instead $r$ and one of $p$ and $q$ is symmetric, say $p$ is symmetric, then
\[
r(t) = t^{\deg r} r(1/t) = t^{\deg p + \deg q} p(1/t) q(1/t) = p(t) t^{\deg q} q(1/t).
\]
But $r(t)$ also equals $p(t)q(t)$, so 
\[
0=r(t)-r(t) = p(t)\left(q(t) - t^{\deg q} q(1/t)  \right).
\]
Hence we must have that $q(t) - t^{\deg q} q(1/t) = 0$ and that $q$ is symmetric.
\end{proof}

We now give an explicit condition for a monomial almost complete intersection to have a symmetric Hilbert series:

\begin{proposition}\label{prop:symmetry_conditions}
Let $R = k[x_1,\dots,x_n]$ and $I =(x_1^{a_1},\dots,x_n^{a_n}, x_1^{p_1}\cdots x_n^{p_n})$ where $0 \leq p_i < a_i$ for all $i$. Then $R/I$ has a symmetric Hilbert series if and only if there is some integer $N$ such that, up to a possible relabeling of the variables, we have that $p_i=0$ for $i>N$, $p_i\neq 0$ for $i\leq N$, and $$(a_1,a_2,\dots,a_N) = (a_1,a_1+p_2,\dots,a_1+p_2+\dots+p_N).$$ 
\end{proposition}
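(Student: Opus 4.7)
The plan is to analyze the Hilbert series via Lemma~\ref{lem:Hilbert_add}. Writing $K=(x_1^{a_1},\dots,x_n^{a_n})$ and $m=x_1^{p_1}\cdots x_n^{p_n}$, the colon ideal $J=K:(m)=(x_1^{a_1-p_1},\dots,x_n^{a_n-p_n})$ is a complete intersection, and
\[
\HS_{R/I}(t)=\HS_{R/K}(t)-t^{|p|}\HS_{R/J}(t),
\]
where $|p|=\sum_i p_i$. After relabeling so that $p_i>0$ for $i\leq N$ and $p_i=0$ for $i>N$, both $\HS_{R/K}(t)$ and $\HS_{R/J}(t)$ share the symmetric factor $\prod_{i>N}(1-t^{a_i})/(1-t)$, so Lemma~\ref{lem:sym_product} reduces the symmetry question to the case $p_i>0$ for all $i\in\{1,\dots,N\}$.

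I next turn symmetry into a polynomial identity. Let $b_i=a_i-p_i>0$, let $D^{*}=\sum_i(a_i-1)$ be the socle degree of $R/K$, and set $s=\min_i b_i$. A basis monomial $x^b$ of $R/I$ satisfies $b_i<a_i$ for all $i$ together with $b_j<p_j$ for some $j$; taking $b_j=p_j-1$ (with $j$ minimizing $b_j$) and $b_i=a_i-1$ otherwise shows the socle degree of $R/I$ is $D=D^{*}-s$. Applying the symmetries $\HS_{R/K}(t)=t^{D^{*}}\HS_{R/K}(1/t)$ and $\HS_{R/J}(t)=t^{D^{*}-|p|}\HS_{R/J}(1/t)$ of the two complete intersections in the condition $\HS_{R/I}(t)=t^D\HS_{R/I}(1/t)$, the symmetry of $R/I$ is equivalent to
\[
(1-t^{s})\prod_{i=1}^{N}(1-t^{a_i})=(1-t^{s+|p|})\prod_{i=1}^{N}(1-t^{b_i}).
\]

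I convert this into a combinatorial condition via cyclotomic factorization. Since $1-t^{c}=\prod_{d\mid c}\Phi_d(t)$, the multiplicity of $\Phi_d$ in a product $\prod_i(1-t^{c_i})$ is $\#\{i:d\mid c_i\}$, and Möbius inversion recovers the multiplicities of the exponents from these divisibility counts; hence equality of two such products is equivalent to equality of the multisets of exponents. The displayed identity therefore becomes
\[
\{s,a_1,\dots,a_N\}=\{s+|p|,b_1,\dots,b_N\}\quad\text{as multisets.}
\]
Since $a_j=b_j+p_j>b_j\geq s$, the value $s$ never occurs among the $a_i$, so on the right $s$ must appear with multiplicity one as some $b_{i_0}$; relabel so $i_0=1$. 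Likewise $\max_ja_j>\max_jb_j$ forces $\max_ja_j=s+|p|$ strictly larger than every $b_j$ (otherwise some $a_j>b_j=s+|p|$ would exceed $\max_ja_j$), so $s+|p|$ appears with multiplicity one on the right and matches a unique $a_{j_0}$ on the left. For $N\geq 2$, $i_0\neq j_0$, since otherwise $p_1=|p|$ would force some other $p_j=0$.

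After these identifications, the multiset equality provides a bijection $\tau:\{1,\dots,N\}\setminus\{j_0\}\to\{2,\dots,N\}$ with $a_j=b_{\tau(j)}$. Regard $\tau$ as a directed graph with edges $j\to\tau(j)$: vertex $1$ has in-degree $0$, vertex $j_0$ has out-degree $0$, and every other vertex has in- and out-degree exactly $1$. \emph{The main obstacle is ruling out cycles}, which I handle using positivity of the $p_i$: any cycle $j_1\to\cdots\to j_k\to j_1$ would give $a_{j_\ell}=b_{j_{\ell+1}}$ for all $\ell$, and summing yields $\sum_\ell p_{j_\ell}=0$, impossible when every $p_i>0$. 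With $N$ vertices, $N-1$ edges, and no cycles, the graph is a single tree, and its degree constraints force it into the path $1=\sigma(1)\to\sigma(2)\to\cdots\to\sigma(N)=j_0$. This encodes the telescoping $a_{\sigma(i)}=a_{\sigma(1)}+p_{\sigma(2)}+\cdots+p_{\sigma(i)}$, so relabeling variables by $\sigma$ yields the stated form of the exponents. The converse follows by reversing the chain: under the telescoping hypothesis one checks directly that $b_i=a_{i-1}$ for $i\geq 2$ and $s+|p|=a_N$, which makes the multiset equality tautological, yielding the polynomial identity and hence the symmetry of $\HS_{R/I}$.
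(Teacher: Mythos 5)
Your proof is correct, but it takes a genuinely different route from the paper's. The paper peels off one variable at a time: it applies Lemma~\ref{lem:Hilbert_add} with $m=x_1^{p_1}\cdots x_{n-1}^{p_{n-1}}$, inducts on $n$ with Lemma~\ref{lem:Two_var_Hilbert} as the base case, proves sufficiency by matching the reflecting degrees of the two symmetric summands, and proves necessity by contradiction at the smallest index where the telescoping formula fails. You instead apply Lemma~\ref{lem:Hilbert_add} once with the full monomial $m$, so that both pieces are complete intersections; combining their palindromic symmetries with the socle degree $D^*-s$ converts symmetry of $R/I$ into the single exact identity $(1-t^{s})\prod(1-t^{a_i})=(1-t^{s+|p|})\prod(1-t^{b_i})$, which unique factorization into cyclotomic polynomials turns into a multiset identity on exponents; the cycle-free functional-graph argument (using $p_i>0$ to kill cycles) then forces the Hamiltonian path that is exactly the telescoping chain. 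Your version is non-inductive, handles both directions of the equivalence from one identity, avoids any dependence on the two-variable analysis of Lemma~\ref{lem:Two_var_Hilbert}, and makes the provenance of the relabeling $\sigma$ completely explicit, at the cost of the cyclotomic/combinatorial machinery; the paper's induction is shorter given the lemmas it already has in place. Two cosmetic points: you overload $b_j$ (both for $a_j-p_j$ and for the exponent vector of the socle monomial), and the reduction to the case $p_i>0$ for all $i$ is easiest to phrase, as the paper does, as $R/I\cong B\otimes_k C$ with $\HS_C$ symmetric followed by Lemma~\ref{lem:sym_product}; neither affects correctness.
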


\begin{proof}
For the first part of this proof, we assume that $N=n$, i.e. $0<p_i<a_i$ for all $i$. We will handle the case when some $p_i = 0$ at the end. 

First, order the variables such that $a_1-p_1\leq a_2-p_2\leq \dots \leq a_n-p_n$. With this ordering, the socle degree of $R/I$ is $p_1+a_2+\cdots + a_n-n$. This is not difficult to see by considering the elements in $R/I$ annihilated by the maximal ideal $(x_1,\dots,x_n)$. The socle of $R/I$ is generated by monomials $x_1^{b_1}x_2^{b_2}\cdots x_n^{b_n}$ where for some $i$, $b_i = p_i$, and for all other indices $b_i = a_i$, i.e.,  $\mathrm{Soc}(R/I) = (x_1^{p_1} x_2^{a_2} \cdots x_n^{a_n}, x_1^{a_1}x_2^{p_2}\cdots x_n^{a_n}, \dots,x_1^{a_1}x_2^{a_2}\cdots x_n^{p_n})$. By our ordering of the variables, the maximal degree among these elements is $p_1+a_2+\cdots + a_n-n$. 

Next, assume that $(a_1,a_2,\dots,a_n) = (a_1,a_1+p_2,\dots,a_1+p_2+\dots+p_n)$. We will prove that $R/I$ is symmetric by induction on $n$. If $n=2$, then again this is contained in Lemma \ref{lem:Two_var_Hilbert}, so we may suppose $n\geq 3$. Using Lemma \ref{lem:Hilbert_add} with $m=x_1^{p_1}\cdots x_{n-1}^{p_{n-1}}$, we have that 
\begin{equation}
\HS_{R/I}(t) = \HS_B(t) + t^{p_1+\cdots + p_{n-1}}\HS_C(t)
\end{equation}
where $B=R/(x_1^{a_1},\dots,x_n^{a_n}, x_1^{p_1}\cdots x_{n-1}^{p_{n-1}})$ and $C=R/(x_1^{a_1-p_1},\dots,x_{n-1}^{a_{n-1}-p_{n-1}}, x_n^{p_n})$. First, by induction we can assume that $A$ is symmetric with reflecting degree given by $r_A = \frac{p_1+a_2+\dots+a_n-n}{2}$. Further, as the complete intersection $B$ is symmetric, $t^{p_1+\cdots + p_{n-1}}\HS_B(t)$ is symmetric with reflecting degree $r_B=\frac{a_1+\dots+a_{n-1}+p_1+\cdots + p_n-n}{2}$. Note that $r_A=r_B$ since $a_n=a_1+p_2+\cdots + p_n$. Hence $\HS_{R/I}(t)$ is a sum of two symmetric polynomials with the same reflecting degree and is therefore also symmetric.

For the converse direction, assume that $\HS_{R/I}(t)$ is symmetric. We will prove that $(a_1,a_2,\dots,a_n) = (a_1,a_1+p_2,\dots,a_1+p_2+\dots+p_n)$ by contradiction. Assume the equality is not true and let $i+1$ be the smallest index for with the formula for $a_{i+1}$ does not hold. Clearly $i>0$. We will the again use Lemma \ref{lem:Hilbert_add}, this time with $m=x_1^{p_1}\cdots x_i^{p_i}$. Then
\[
\HS_{R/I}(t) = \HS_{R/J}(t) + t^{p_1+\cdots + p_i}\HS_{R/K}(t)
\]
where 
\begin{align*}
J&=(x_1^{a_1},\dots,x_n^{a_n}, x_1^{p_1}\cdots x_i^{p_i}), \text{ and}\\
K&=(x_1^{a_1-p_1},\dots,x_i^{a_i-p_i}, x_{i+1}^{a_{i+1}},\dots, x_n^{a_n}, x_{i+1}^{p_{i+1}}\cdots x_n^{p_n}).
\end{align*}
Note that $R/J$ has socle degree $s=p_1+a_2+\cdots + a_n-n$ and is symmetric since it is a tensor product of a complete intersection and $k[x_1,\dots, x_i]/(x_1^{a_1},\dots,x_i^{a_i}, x_1^{p_1}\cdots x_i^{p_i})$, which is symmetric by assumption on $a_1,\dots, a_i$ and the earlier proven direction. Hence $\HS_{R/I}(t)$ and $\HS_{R/J}(t)$ are both symmetric polynomials of the same degree $s$ having non-zero constant term. This forces $t^{p_1+\cdots + p_i}\HS_{R/K}(t)$ to also be symmetric with reflecting degree $s/2$. Indeed, an argument similar to that of Lemma \ref{lem:sym_product} gives that if $r(t)=t^{p_1+\cdots + p_i}\HS_{R/K}(t)$, then $t^sr(1/t)=r(t)$.
Since $r$ is supported from $p_1+\cdots+p_i$ to $a_1+\cdots + a_i+p_{i+1}+a_{i+2}+\cdots + a_n-n$ (where we again used the formula for the socle degree of our almost monomial complete intersections), $t^sr(1/t)=r(t)$ gives that
\[
(p_1+\cdots + p_i) + a_1+\cdots + a_i+p_{i+1}+a_{i+2}+\cdots + a_n-n = s = p_1+a_2+\cdots + a_n-n.
\]
But this is equivalent to $a_1+p_2+\cdots + p_{i+1}=a_{i+1}$, contradicting our choice of $i$. Hence we must have that $(a_1,a_2,\dots,a_n) = (a_1,a_1+p_2,\dots,a_1+p_2+\dots+p_n)$, as desired.

We now consider the case in which some $p_i=0$ for $i>N$. Define $B = k[x_1,\dots, x_N]/(x_1^{a_1},\dots,x_N^{a_N}, x_1^{p_1}\cdots x_N^{p_N})$ and $C = k[x_{N+1},\dots x_n]/(x_{N+1}^{a_{N+1}},\dots, x_n^{a_n})$. Then $R/I \cong B \otimes_k C$, and 
\[
\HS_{R/I}(t) = \HS_B(t)\cdot \HS_C(t).
\] 
Note that $C$ is a symmetric polynomial, so Lemma \ref{lem:sym_product} gives that $\HS_{R/I}(t)$ is symmetric if and only if $\HS_B(t)$ is symmetric. But $B$ is of the form where all $p_1,\dots, p_m\neq 0$, so we can use the earlier discussion on $B$. 
\end{proof}

\begin{theorem}\label{thm:slp_symmetric_aci}
    Let $R = k[x_1,\dots,x_n]$ and $I =(x_1^{a_1},\dots,x_n^{a_n}, x_1^{p_1}\cdots x_n^{p_n})$ with $0 \leq p_i < a_i$ for all $i$. If the Hilbert series of $A = R/I$ is symmetric, then $A$ has the SLP. 
\end{theorem}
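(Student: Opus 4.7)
The plan is to combine Proposition \ref{prop:symmetry_conditions} with the Harima--Watanabe central simple module characterization from Theorem \ref{hw-csm-thm}. By Proposition \ref{prop:symmetry_conditions}, after relabeling we may write $A=B\otimes_k C$, where
\[
B = k[x_1,\dots,x_N]/(x_1^{a_1},\dots,x_N^{a_N}, x_1^{p_1}\cdots x_N^{p_N})
\]
is a MACI with $p_i>0$ and $a_i=a_1+p_2+\dots+p_i$ for all $i\leq N$, and $C = k[x_{N+1},\dots,x_n]/(x_{N+1}^{a_{N+1}},\dots,x_n^{a_n})$ is a monomial complete intersection. It then suffices to show that $B$ has the SLP: since $B$ has symmetric Hilbert series, SLP implies unimodality and hence the almost centered property, so iterating Lindsey's theorem \cite[Theorem 3.10]{Lindsey} over the one-variable factors of $C$ propagates the SLP up to $A$.

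The proof that $B$ has the SLP will proceed by induction on $N$, with base case $N=2$ handled by \cite[Proposition 4.4]{Codim_2}. For the inductive step, I apply Theorem \ref{hw-csm-thm} to $B$ using the (non-generic) linear form $\ell=x_N$. Since $\ell^{a_N}=0$ and no smaller power vanishes, $f_1=a_N$. A direct analysis of the ideals $(0:x_N^k)+(x_N)\subset B$ shows that they take only three distinct values as $k$ decreases from $a_N$ to $0$, with jumps precisely at $k=a_N$ and $k=p_N$. Consequently there are exactly two nonzero central simple modules, namely
\[
V_{1,x_N} \cong k[x_1,\dots,x_{N-1}]/(x_1^{a_1},\dots,x_{N-1}^{a_{N-1}}, x_1^{p_1}\cdots x_{N-1}^{p_{N-1}})
\]
and
\[
V_{2,x_N} \cong k[x_1,\dots,x_{N-1}]/(x_1^{a_1-p_1},\dots,x_{N-1}^{a_{N-1}-p_{N-1}}),
\]
where $V_{2,x_N}$ carries a grading shift of $p_1+\dots+p_{N-1}$.

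The key observation is that $V_{1,x_N}$ is itself a MACI of the type required by Proposition \ref{prop:symmetry_conditions} in $N-1$ variables (the chain relation restricts correctly to $a_1,\dots,a_{N-1}$), so it has a symmetric Hilbert series and, by the induction hypothesis, the SLP. The module $V_{2,x_N}$ is a monomial complete intersection, hence already has the SLP and a symmetric Hilbert series. Finally, the reflecting degree of $B$ is $(p_1+a_2+\dots+a_N-N)/2$, and a direct computation using $f_1=a_N$, $f_2=p_N$, the starting degree of $V_{2,x_N}$, and the chain relation $a_N=a_1+p_2+\dots+p_N$ shows that the reflecting degrees of $\widetilde{V}_{1,x_N}$ and $\widetilde{V}_{2,x_N}$ both coincide with that of $B$. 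Theorem \ref{hw-csm-thm} then yields the SLP for $B$, completing the induction.

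The main obstacle is the bookkeeping for the central simple modules with respect to the non-generic linear form $x_N$: one has to verify that only the two jumps at $f_1=a_N$ and $f_2=p_N$ occur and to identify both quotients concretely, which requires splitting the analysis of $(0:x_N^k)$ into the regimes $k\geq p_N$ and $k<p_N$. Once this is established, the remaining steps are essentially arithmetic: the induction handles the SLP for the smaller MACI, the Stanley--Watanabe theorem handles the complete intersection, and matching reflecting degrees reduces to the defining chain relation of Proposition \ref{prop:symmetry_conditions}.
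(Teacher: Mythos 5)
Your proposal is correct and follows essentially the same route as the paper: the same two central simple modules with respect to the last variable (with $f_1=a_N$, $f_2=p_N$), the same induction on the number of variables with base case the codimension-two result, and the same reflecting-degree computation reducing to the chain relation of Proposition \ref{prop:symmetry_conditions}. The only cosmetic difference is that you handle the variables with $p_i=0$ explicitly via unimodality, almost-centeredness, and Lindsey's theorem, where the paper simply invokes the tensor-product reduction from the preceding proposition.
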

\begin{proof}
We start with the assumption that $(a_1,a_2,\dots,a_n) = (a_1,a_1+p_2,\dots,a_1+p_2+\dots+p_n)$, using a similar argument as in the previous proposition to reduce to the case where all $p_i$ are nonzero. We will prove the theorem using central simple modules.

We have, with respect to the linear form $x_n$, two central simple modules. The first is 
\begin{align*}
V_1 &= \frac{(0:x_n^{a_n})+(x_n)}{(0:x_n^{a_n-1})+(x_n)} \\
&= \frac{A}{( x_1^{p_1} \cdots x_{n-1}^{p_{n-1}}, x_n)} \\
&\cong \frac{R}{(x_1^{a_1}, \dots, x_{n-1}^{a_{n-1}}, x_n, x_1^{p_1} \cdots x_{n-1}^{p_{n-1}})}, 
\end{align*}
so $f_1 = a_n$ and hence
$\widetilde{V}_1 \cong R/(x_1^{a_1}, \dots, x_n^{a_n}, x_1^{p_1} \cdots x_{n-1}^{p_{n-1}})$. The second central simple module is
\begin{align*}   
 V_2 &= \frac{(0:x_n^{p_n})+(x_n)}{(0:x_n^{p_n-1})+(x_n)} \\
 &= \frac{(x_1^{p_1} \cdots x_{n-1}^{p_{n-1}}, x_n)}{(x_n)} \\ &\cong R/(x_1^{a_1-p_1},\dots,x_{n-1}^{a_{n-1}-p_{n-1}},x_n)[-(p_1+\dots+p_{n-1})].
 \end{align*}
 with $f_2 = p_n$ and
 $\widetilde{V}_2 \cong R/(x_1^{a_1-p_1},\dots,x_{n-1}^{a_{n-1}-p_{n-1}}, x_{n}^{p_{n}})$ (with the same shifted grading as above).

Notice that $V_2$ always has the SLP and a symmetric Hilbert series, as it is a monomial complete intersection. Furthermore, if $\HS_{R/I}(t)$ is symmetric, we have $(a_1,\dots,a_n) =(a_1,a_1+p_2,\dots,a_1+p_2+\dots+p_n)$ by Proposition \ref{prop:symmetry_conditions}. So $\HS_{V_1}(t)$ is symmetric as well since $V_1$ is a monomial almost complete intersection in one fewer variable than $R/I$ and $(a_1,\dots,a_{n-1}) =(a_1,a_1+p_2,\dots,a_1+p_2+\dots+p_{n-1})$. 

Thus we may apply Theorem \ref{hw-csm-thm} here, noticing that the conditions for $\HS_{R/I}(t)$ to be symmetric are the same as the conditions for the reflecting degrees to coincide as needed, as we now explain. The reflecting degree of $\widetilde{V}_1$ is $r_1 = \frac{p_1+a_2+\dots+a_n-n}{2}$. Observe that the socle degree of $R/I$ is exactly the same as that of $\widetilde{V}_1$, hence the reflecting degree of $R/I$ is $r = r_1$. Since $\HS_{V_2}(t)$, and therefore also $\HS_{\widetilde{V}_2}(t)$, are symmetric, and we must have $r_2 = r = r_1$ if $\HS_{R/I}(t)$ is to be symmetric by using that $\HS_{R/I}(t) = \HS_{\widetilde{V}_1}(t) + \HS_{\widetilde{V}_2}(t)$. So, from Theorem \ref{hw-csm-thm}, $R/I$ has the SLP as long as $V_2$ has the SLP. This is always the case, following from an induction on $n$, with the base case being that when $n = 2$, any algebra and thus any monomial almost complete intersection has the SLP.  
\end{proof}

We conclude our discussion of monomial algebras by giving an example illustrating the results in this section. The below is the smallest (in terms of the degrees of the generators) nontrivial MACI in $n$ variables, where the non-pure-power generator has full support, which has a symmetric Hilbert series. 

\begin{example}
Let $R = k[x_1,\dots,x_n]$ and $I = (x_1^2,x_2^3,\dots,x_n^{n+1}, x_1 \cdots x_n$). Then $R/I$ has a symmetric Hilbert series since the exponents satisfy the conditions of Proposition \ref{prop:symmetry_conditions}. For example, when $n = 4$  we have $\HS_{R/I}(t) = 1+4t+9t^2+15t^3+19t^4+19t^5+15t^6+9t^7+4t^8+t^9$. So, by the previous theorem $R/I$ has the strong Lefschetz property. 

\end{example}

\begin{remark}
From the work of Wiebe \cite{wiebe}, we know that a graded ideal $I$ inherits the strong (or weak) Lefschetz property from its initial ideal. As an immediate corollary of the above theorem, we observe that any $k$-algebra with a symmetric Hilbert series defined by an ideal $I$ with an initial ideal that is a monomial complete intersection has the strong Lefschetz property. This holds in particular for Gorenstein algebras, since they have symmetric Hilbert series. Since almost complete intersections can never be Gorenstein \cite{ACI_not_Gorenstein}, and since an ideal cannot have more minimal generators than its initial ideal, we know that such a Gorenstein algebra must be a complete intersection. However, it is possible no such complete intersection exists - experiments in Macaulay2 (\cite{M2}) have not yielded any concrete examples. 
\end{remark}

\section*{Acknowledgments}
Work on this project began at the conference ``Lefschetz properties in algebra, geometry, topology and combinatorics", held in Kraków, Poland, in June 2024. We thank the organizers for their support and hospitality.

\bibliographystyle{plain}
\bibliography{references}

\begin{thebibliography}{10}

\bibitem{Brenner-Kaid}
Holger Brenner and Almar Kaid.
\newblock Syzygy bundles on {{\(\mathbb P^2\)}} and the weak {Lefschetz} property.
\newblock {\em Ill. J. Math.}, 51(4):1299--1308, 2007.

\bibitem{Cook-Nagel-Almost-CI}
David~II Cook and Uwe Nagel.
\newblock Syzygy bundles and the weak {Lefschetz} property of monomial almost complete intersections.
\newblock {\em J. Commut. Algebra}, 13(2):157--178, 2021.

\bibitem{Gluing_SashaSamuelLisa}
Oleksandra Gasanova, Samuel Lundqvist, and Lisa Nicklasson.
\newblock On decomposing monomial algebras with the {Lefschetz} properties.
\newblock {\em J. Pure Appl. Algebra}, 226(6):15, 2022.

\bibitem{M2}
Daniel~R. Grayson and Michael~E. Stillman.
\newblock Macaulay2, a software system for research in algebraic geometry.
\newblock Available at {\tt math.uiuc.edu/Macaulay2}.

\bibitem{Lefschetz_book}
Tadahito Harima, Toshiaki Maeno, Hideaki Morita, Yasuhide Numata, Akihito Wachi, and Junzo Watanabe.
\newblock {\em {The Lefschetz Properties}}, volume 2080 of {\em Lecture Notes in Math.}
\newblock Springer, Heidelberg, 2013.

\bibitem{Codim_2}
Tadahito Harima, Juan Migliore, Uwe Nagel, and Junzo Watanabe.
\newblock The weak and strong {Lefschetz} properties for artinian {K-algebras}.
\newblock {\em J. Algebra}, 262:99--126, 2003.

\bibitem{central-simple}
Tadahito Harima and Junzo Watanabe.
\newblock The central simple modules of {A}rtinian {G}orenstein algebras.
\newblock {\em J. Pure Appl. Algebra}, 210(2):447--463, 2007.

\bibitem{nonstd}
Tadahito Harima and Junzo Watanabe.
\newblock The strong {L}efschetz property for {A}rtinian algebras with non-standard grading.
\newblock {\em J. Algebra}, 311(2):511--537, 2007.

\bibitem{commutator}
Tadahito Harima and Junzo Watanabe.
\newblock The commutator algebra of a nilpotent matrix and an application to the theory of commutative {A}rtinian algebras.
\newblock {\em J. Algebra}, 319(6):2545--2570, 2008.

\bibitem{full_rank_recursion}
Filip Jonsson~Kling.
\newblock {Preserving Lefschetz properties after extension of variables}, Preprint 2025.
\newblock {\tt arXiv:2503.16990}.

\bibitem{ACI_not_Gorenstein}
Ernst Kunz.
\newblock Almost complete intersections are not {Gorenstein} rings.
\newblock {\em J. Algebra}, 28:111--115, 1974.

\bibitem{Lindsey}
Melissa Lindsey.
\newblock {A class of Hilbert series and the strong Lefschetz property}.
\newblock {\em Proc. Amer. Math. Soc.}, 139(1):79--92, 2011.

\bibitem{MMRN-Almost-CI}
Juan~C. Migliore, Rosa~M. Mir{\'o}-Roig, and Uwe Nagel.
\newblock Monomial ideals, almost complete intersections and the weak {Lefschetz} property.
\newblock {\em Trans. Am. Math. Soc.}, 363(1):229--257, 2011.

\bibitem{Stanley}
Richard~P. Stanley.
\newblock {Weyl groups, the hard Lefschetz theorem, and the Sperner property}.
\newblock {\em SIAM J. Algebraic Discrete Methods}, 1(2):168--184, 1980.

\bibitem{Watanabe}
Junzo Watanabe.
\newblock {The Dilworth number of Artinian rings and finite posets with rank function}.
\newblock In {\em {Commutative algebra and combinatorics}}, volume~11 of {\em Advanced Studies in Pure Math.} Kinokuniya Co. North Holland, Amsterdam, 1987.

\bibitem{wiebe}
Attila Wiebe.
\newblock The {L}efschetz property for componentwise linear ideals and {G}otzmann ideals.
\newblock {\em Comm. Algebra}, 32(12):4601--4611, 2004.

\end{thebibliography}

\end{document}